\setlist[description]{leftmargin=\parindent,labelindent=\parindent}
\def\namedlabel#1#2{\begingroup
    #2%
    \def\@currentlabel{#2}%
    \phantomsection\label{#1}\endgroup
}
\DeclareMathAlphabet{\mathpzc}{OT1}{pzc}{m}{it}
\newcommand{\sfop}[1]{\operatorname{\mathsf{#1}}}
\newcommand{\bfop}[1]{\operatorname{\mathbf{#1}}}
\newcommand{\sub}{\sfop{sub}}
\newcommand{\angs}[1]{\langle #1\rangle}
\newcommand{\tics}{terms-in-con\-text\xspace}
\newcommand{\fic}{for\-mula-in-con\-text\xspace}
\newcommand{\fics}{for\-mulas-in-con\-text\xspace}
\newcommand\llol[1]{\mathbin{%
    \setbox0\hbox{$\multimapinv$}%
    \rlap{\hbox to \wd0{\hss\hss\hss\raisebox{.8\height}{$\scriptstyle 
    #1$}\hss}}\box0
}}
\newcommand\rlol[1]{\mathbin{%
    \setbox0\hbox{$\multimap$}%
    \rlap{\hbox to \wd0{\hss\hss\hss\raisebox{.8\height}{$\scriptstyle #1\,$}
    \hss}}\box0
}}
\newcommand{\ho}{\sfop{ho}}
\newcommand{\mcc}{\mathcal{C}}
\newcommand{\mcd}{\mathcal{D}}
\newcommand{\mch}{\mathcal{H}}
\newcommand{\mck}{\mathcal{K}}
\newcommand{\mcm}{\mathcal{M}}
\newcommand{\mcw}{\mathcal{W}}
\newcommand{\catset}{{\mathbf{Set}}}
\newcommand{\cateff}{{\mathcal{E}\!\mathit{ff}}}
\newcommand{\pfib}{\mathbf{PFib}}
\newcommand{\catcat}{{\mathbf{Cat}}}
\newcommand{\catb}{\mathbb{B}}
\newcommand{\catc}{\mathbb{C}}
\newcommand{\catd}{\mathbb{D}}
\newcommand{\pcaa}{\mathcal{A}}
\newcommand{\trip}{{\EuScript{P}}}
\newcommand{\triq}{{\EuScript{Q}}}
\newcommand{\csep}{\mathrel|}		
\newcommand{\msep}{\mathrel|}		
\newcommand{\tripower}{{}\mathfrak{P}}			
\newcommand{\op}{^\mathsf{op}}				
\newcommand{\id}{\mathrm{id}}				
\newcommand{\qdot}{\,.\,}				
\newcommand{\nameof}[1]{\ulcorner #1\urcorner}
\newcommand{\setof}[2]{\{#1\msep #2\}}
\newcommand{\sem}[1]{{\llbracket #1 \rrbracket}}
\newcommand{\csem}[2]{{\llbracket #2 \rrbracket_{#1}}}
\newcommand{\cterm}[2]{{( #1\csep #2 )}}
\newcommand{\aro}{{{(A,\rho)}}}
\newcommand{\bsi}{{{(B,\sigma)}}}
\newcommand{\ctau}{{{(C,\tau)}}}
\newcommand{\cta}{{{(C,\tau)}}}
\newcommand{\tttc}{tripos-to-topos construction\xspace}
\newcommand{\xto}{\xrightarrow}
\newcommand{\ve}{\varepsilon}
\newcommand{\vf}{\varphi}
\newcommand{\imp}{\Rightarrow}
\newcommand{\sh}{\bfop{Sh}}
\newcommand{\brprod}{\Join}
\newcommand{\adj}{\dashv}
\newcommand{\ent}{\vdash}
\newcommand{\smtimes}{\!\times\!}
\newcommand{\catex}{\mathbf{Ex}}
\newcommand{\catpos}{\mathbf{Pos}}
\newcommand{\adjr}[2]{
\ar@/_6pt/[r]_{#1}
\ar@{}[r]|\top
\ar@{<-}@/^6pt/[r]^{#2}
}
\newcommand{\adjrr}[2]{
\ar@/_6pt/[rr]_{#1}
\ar@{}[rr]|\top
\ar@{<-}@/^6pt/[rr]^{#2}
}
\newcommand{\adjl}[2]{
\ar@/^6pt/[l]^{#1}
\ar@{}[l]|\top
\ar@{<-}@/_6pt/[l]_{#2}
}
\newcommand{\adjd}[2]{
\ar@/_6pt/[d]_{#1}
\ar@{}[d]|\dashv
\ar@{<-}@/^6pt/[d]^{#2}
}
\newcommand{\adju}[2]{
\ar@/^6pt/[u]^{#1}
\ar@{}[u]|\dashv
\ar@{<-}@/_6pt/[u]_{#2}
}
\newcommand{\dadj}[4]{ 
\ar@/^6pt/[#3]^{#1}
\ar@{}[#3]|{#4}
\ar@{<-}@/_6pt/[#3]_{#2}
}
\newcommand{\fifa}{\EuScript{A}}
\newcommand{\fifb}{\EuScript{B}}
\newcommand{\fifd}{\EuScript{D}}
\newcommand{\conflict}{\ar@{~}}
\def\signed #1{{\leavevmode\unskip\nobreak\hfil\penalty50\hskip2em
  \hbox{}\nobreak\hfil(#1)%
  \parfillskip=0pt \finalhyphendemerits=0 \endgraf}}
\newsavebox\mybox
\newcommand{\gcons}{\textstyle\int\!}
\newcommand{\mor}{\mathrm{mor}}
\newcommand{\inv}{^{-1}}
\newcommand{\catcpos}{\catc\op\to\catpos}
\newcommand{\image}{\mathsf{im}}
\newcommand{\subs}{\subseteq}
\newcommand{\fa}{\forall}
\newcommand{\ex}{\exists}
\newcommand{\sex}{\;\exists}
\newcommand{\seq}{\;=\;}
\DeclareRobustCommand\onedot{\futurelet\@let@token\@onedot}
\def\@onedot{\ifx\@let@token.\else.\null\fi\xspace}
\newcommand{\ie}{i.e\onedot}
\newcommand{\eg}{e.g\onedot}
\newcommand{\aka}{a.k.a\onedot}
\newcommand{\wrt}{w.r.t\onedot}
\newcommand{\cf}{cf\onedot}
\newcommand{\sequiv}{\;\equiv\;}
\newcommand{\ax}[1]{\AXC{$#1$}}
\newcommand{\ui}[1]{\UIC{$#1$}}
\newcommand{\bi}[1]{\BIC{$#1$}}
\newcommand{\ti}[1]{\TIC{$#1$}}
\newcommand{\drap}{\DP}
\newcommand{\cots}{,\dots,}
\newcommand{\wots}{\wedge\dots\wedge}
\renewcommand{\iff}{if and only if\xspace}
\newcommand{\cofo}{category of fibrant objects\xspace}
\newcommand{\cofos}{categories of fibrant objects\xspace}
\newcommand{\wecat}{we-category\xspace}
\newcommand{\wecats}{we-categories\xspace}
\newcommand{\optopos}{{}\op\to\catpos}
\newcommand{\cfib}{\mathpzc{F}}
\newcommand{\cweq}{\mathpzc{W}}
\newcommand{\eprime}{$\exists$-prime\xspace}
\newcommand{\ecompletion}{$\exists$-completion\xspace}
\newcommand{\ecompletions}{$\exists$-completions\xspace}
\newcommand{\ssc}{s.s.c\onedot}
\newcommand{\imsl}{indexed meet-semi\-lat\-tice\xspace}
\newcommand{\imsls}{indexed meet-semi\-lat\-tices\xspace}
\newcommand{\fmp}{finite-meet-preserving\xspace}
\renewcommand{\lim}{\mathrm{lim}}
\DeclareFontFamily{U}{min}{}
\DeclareFontShape{U}{min}{m}{n}{<-> udmj30}{}
\newcommand{\scol}{\;:\;}
\newcommand{\qtext}[1]{\quad\text{#1}\quad}
\newcommand{\qqtext}[1]{\qquad\text{#1}\qquad}
\newcommand{\cocyl}{\sfop{cocyl}}
\numberwithin{equation}{section}
\newtheorem{theorem}{Theorem}[section]
\newtheorem{proposition}[theorem]{Proposition}
\newtheorem{lemma}[theorem]{Lemma}
\newtheorem{definition}[theorem]{Definition}
\newtheorem{remark}[theorem]{Remark}
\newtheorem{notation}[theorem]{Notation}
\newtheorem{notationandterminology}[theorem]{Notation and Terminology}
\newtheorem{remarks}[theorem]{Remarks}
\newtheorem{examples}[theorem]{Examples}
\newtheorem{void}[theorem]{$\!\!$}
\newenvironment{blank}[1]%
{\begin{void}\textbf{#1.}}%
{\end{void}}
\theoremstyle{nonumberplain}
\newtheorem{proof}{Proof.}
\newenvironment{anumerate}{\begin{enumerate}[label=(\textit{\alph*})]}{\end{enumerate}}
\newenvironment{inumerate}{\begin{enumerate}[label=\normalfont(\roman*)]}{\end{enumerate}}
\newcommand{\cateed}{\mathbf{EED}}
\renewcommand{\iff}{if and only if\xspace}
\renewcommand{\pfib}{proto-fibrant\xspace}
\newcommand{\canp}{\catc\langle\trip\rangle}
\newcommand{\bana}{\catb\langle\fifa\rangle}
\newcommand{\canphi}{\catc\langle\Phi\rangle}
\newcommand{\canq}{\catc\langle\triq\rangle}
\newcommand{\csqp}{\catc[\trip]}
\newcommand{\csqq}{\catc[\triq]}
\renewcommand{\catcpos}{\catc\op\to\catpos}
\newcommand{\catbpos}{\catb\op\to\catpos}
\newcommand{\catcp}{\catc[\trip]}
\newcommand{\derline}[2]{{#1}$\quad\implies\quad$ #2}
\newcommand{\tripcoppos}{\trip:\catcpos}
\newcommand{\sort}[1]{{\mathfrak{s}(#1)}}
\newcommand{\cjudg}[3]{{#2\ent_{#1}#3}}
\newcommand{\blakk}[1]{stu}
\renewcommand{\fam}{\mathsf{fam}}
\newcommand{\low}{\mathsf{low}}
\newcommand{\subslice}[3]{{#1}/\!_{#2}{#3}}
\newcommand{\setpos}{\catset\op\to\catpos}
\newcommand{\treff}{{\sfop{eff}}}
\title{Categories of partial equivalence relations as localizations}
\author{Jonas Frey}
\begin{document}

\maketitle

\begin{abstract}
We construct a \emph{category of fibrant objects} $\canp$ in the sense of
K.~Brown from any \emph{indexed frame} (a kind of indexed poset generalizing
triposes) $\trip$, and show that its homotopy category is the Barr-exact
category $\csqp$ of partial equivalence relations and compatible functional
relations. In particular this gives a presentation of \emph{realizability
toposes} as homotopy categories.

We give criteria for the existence of left and right derived functors to
functors $\catc\langle\Phi\rangle:\canp\to\canq$ induced by
finite-meet-preserving transformations $\Phi:\trip\to\triq$ between indexed
frames.
\end{abstract}

\section*{Introduction}

Hyland, Johnstone and Pitts introduced the notion of \emph{tripos}, together
with the \emph{\tttc}, as a powerful tool to describe elementary toposes, in
particular the novel family of \emph{realizability toposes}~\cite{hjp80}. A
tripos is an indexed poset $\trip:\catcpos$ with sufficient structure to
interpret intuitionistic higher order logic, and the associated topos
$\catc[\trip]$ is defined as a category of partial equivalence relations and
compatible functional relations in the internal language of~$\trip$. The proof
that $\catc[\trip]$ is a topos~\cite[Theorem~2.13]{hjp80} relies on the
higher-order structure of $\trip$, but the construction of $\catc[\trip]$ is
well-defined and produces an \emph{exact category}\footnote{I.e.\ a regular
category with effective equivalence relations; called \emph{effective regular}
category in~\cite{elephant1}.} as soon as $\trip$ models the
$(\ex,\wedge,\top)$-fragment of first order logic. In the terminology
of~\cite{maietti2012elementary,maietti2012unifying,maietti2017triposes} such a
$\trip$ is called an \emph{existential doctrine}. If $\trip$ furthermore models
\emph{equality}\,---\,thus the $(\ex,\wedge,\top,=)$-fragment of first order
logic known as \emph{regular logic}~\cite[D1.1]{elephant2}\,---\,then it is
called an \emph{elementary} existential doctrine\footnote{The term `elementary
existential doctrine' was introduced by Lawvere~\cite{lawvere1970equality} for a
slightly different\,---\,in particular non-posetal\,---\,notion, and was adapted
by Maietti and Rosolini to the posetal setting.}. In~\cite{maietti2012unifying},
Maietti and Rosolini show that for elementary existential doctrines $\trip$, the
construction of $\csqp$ can be characterized as left biadjoint to a forgetful
functor $U:\catex\to\cateed$ sending exact categories to their indexed poset of
subobjects, see Example
\ref{ex:indexed-posets}\ref{ex:indexed-posets:canonical-indexing}.

This characterization implies immediately that the assignment sending elementary
existential doctrines $\trip$ to exact categories $\catc[\trip]$ is functorial
\wrt the morphisms of $\cateed$, which are indexed monotone maps preserving the
connectives of regular logic. However, remarkably, it turns out that between
indexed posets with \emph{more}  logical structure (specifically triposes) it is
possible to construct functors from indexed monotone maps $\Phi:\trip\to\triq$
preserving \emph{less} logical structure (specifically only finite meets); a
phenomenon that was exploited in~\cite{hjp80} to construct geometric morphisms
between toposes from suitable adjunctions of indexed monotone maps between
triposes, and has been analyzed in a 2-categorical framework
in~\cite{frey2015triposes}.

\medskip
 
In the present work we show that for a subclass of elementary existential
doctrines which we call \emph{indexed frames}, and which satisfy a stronger
Beck--Chevalley condition, the category $\catcp$ of partial equivalence
relations $\trip:\catcpos$  can be presented as localization of a \emph{\cofo}
$\canp$ (which is homotopically trivial, see Remark~\ref{rem:degenerate}),
and that the functors arising from \fmp transformations $\Phi:\trip\to\triq$
between triposes mentioned above can be understood as \emph{right derived
functors} of functors $\canphi:\canp\to\canq$ between categories of fibrant
objects~(Theorem~\ref{thm:derived}\ref{thm:derived-right}). We also show that
$\canphi$ admits a \emph{left} derived functor whenever $\trip$ has `enough
\eprime predicates' (Theorem~\ref{thm:derived}\ref{thm:derived-left}), and to
set this up we develop  in Section~\ref{sec:eprime} the notion of \emph{\eprime
predicate} and its relation to indexed frames that are obtained by `freely
adding' existential quantification to more primitive indexed posets. 

It should be pointed out that the fibrations in the category-of-fibrant-objects
structure on $\canp$ play a somewhat curious role in our analysis: the fact
that $\catcp$ is the homotopy category can be proven directly without ever
mentioning them, and to prove existence of right derived functors we use a
fibrant replacement-style argument (Lemma~\ref{lem:qfib-kan}) using the
alternative notion of \emph{proto-fibrant
object}~(Definition~\ref{def:proto-fibrant}), ordinary fibrant replacement being
trivial and unhelpful in a setting where everything is already fibrant. However,
the fibrations play an indirect role in the existence proof of left derived
functors (via the notion of \emph{cofibrant object} in a \cofo), and are used in
Remark~\ref{rem:degenerate} to show that the localizations are homotopically
trivial.

\medskip

\subsection*{Related work and acknowledgments.}

This work was inspired by a talk by Jaap van~Oosten about the work presented
in~\cite{vanoosten2015notion}, and influenced by a talk by Benno van den Berg on
the contents of~\cite{vandenberg2016exact}. Presentations of realizability
toposes as homotopy categories have also been given
in~\cite{rosolini2016category,vandenberg2020univalent}.

Thanks to Zhen Lin Low, Rasmus Møgelberg, Mathieu Anel, and especially to Benno
van den Berg for discussions related to the content of this work. Finally, I
want to thank the anonymous referees for their careful reading and very helpful
feedback, which resulted in significant improvements in the article.

\medskip

This material is based upon work supported by the Air Force Office of Scientific
Research under award numbers %
FA9550-15-1-0053 
and FA9550-20-1-0305, 
and by the U. S. Army Research Office under grant number %
W911NF-21-1-0121. 

\section{Indexed posets}

An \emph{indexed poset} is a functor $\fifa:\catbpos$ from the opposite of an
arbitrary category $\catb$\,---\,called \emph{base category}\,---\,to the
category $\catpos$ of posets and monotone maps\footnote{Different assumptions on
the relative sizes of $\catb$ and the posets in $\catpos$ are possible here, we
view as basic the case where $\catb$ is small and $\catpos$ is the category
of small posets. Then \eg the case of small posets over locally small categories
can be recovered by postulating a universe and adding assumptions.}. For
$I\in\catb$, the poset $\fifa(I)$ is called the \emph{fiber} of $\fifa$ over $I$
and its elements are called \emph{predicates} on $I$. The monotone maps
$\trip(f):\trip(I)\to\trip(J)$ for $f:J\to I$ are called \emph{reindexing maps}
and are abbreviated $f^*$ when $\trip$ is clear from the context. The
\emph{total category} (\aka \emph{Grothendieck construction}) of $\fifa$ is the
category $\gcons\fifa$ whose objects are pairs $(I\in\catb,\varphi\in\fifa(I))$
and whose morphisms from $(I,\varphi)$ to $(J,\psi)$ are morphisms $f:I\to J$ in
$\catb$ satisfying $\varphi\leq f^*\psi$. The total category admits a forgetful
functor $\gcons\fifa\to\catb$ which is a Grothendieck fibration.

Given indexed posets $\fifa,\fifb:\catbpos$, an \emph{indexed monotone map}
is a natural transformation $\Phi:\fifa\to\fifb$.

An \emph{indexed meet-semilattice} is an indexed poset $\fifa:\catbpos$ where
all fibers are meet-semilattices (which we always assume to be `bounded', \ie to
have a greatest element $\top$) and all reindexing maps preserve finite
meets.
An indexed monotone map $\Phi:\fifa\to\fifb:\catbpos$ between \imsls is called
\emph{cartesian} if it preserves fiberwise finite meets. 

An \emph{indexed frame} is an \imsl $\trip:\catcpos$ on a finite-limit
category $\catc$, satisfying the following three conditions. 
\begin{description}
    \item[\namedlabel{ax:ex}{(Ex)}] 
All reindexing maps $f^*$ (for $f:J\to I$) have left adjoints
$\exists_f:\trip(J)\to\trip(I)$.

    \item[\namedlabel{ax:fr}{(Fr)}] 
    We have
    $
    (\exists_f\varphi)\wedge\psi=\exists_f(\varphi\wedge f^*\psi)
$
    for all $f:J\to I$, $\varphi\in\trip(J)$, $\psi\in\trip(I)$.

    \item[\namedlabel{ax:bc}{(BC)}]
    We have 
    $\exists_h\circ k^*= f^* \circ\exists_g$ for all pullback squares
$
\begin{tikzcd}[sep = small]
            D\ar[r,"k"']\ar[d,"h"'] & B\ar[d,"g"]\\
            A\ar[r,"f"] & C
\end{tikzcd}$ in $\catc$.
\end{description}
Condition \ref{ax:fr} is known as the \emph{Frobenius law}, and \ref{ax:bc}
as the \emph{Beck--Chevalley} condition.

\medskip

A \emph{morphism of indexed frames} is an indexed monotone map that is cartesian
and commutes with $\ex$ along all maps in the base.

\begin{remark}\label{rem:similar-notions} Notions similar to indexed frames
appear under various names in the literature\,---\,compare e.g.\ to
\emph{regular fibrations} in \cite[Definition~4.2.1]{jacobs2001categorical} and
the already mentioned \emph{elementary existential doctrines}, which
only require finite \emph{products} in the base category and use a weaker 
version of the Beck--Chevalley condition.

Stekelenburg's \emph{fibered locales}~\cite{stekelenburg2013realizability}
require \ref{ax:bc} for all existing pullback squares, but the definition is
stated for arbitrary base categories.

These weaker notions are not adequate for the present paper, since the proof of
Theorem~\ref{thm:cofo} relies\,---\,via Lemma~\ref{lem:pullback-rule}\,---\,on
the fact that arbitrary pullbacks exist in the base category and
satisfy~\ref{ax:bc}.
\end{remark}

  A \emph{tripos} is an indexed frame $\tripcoppos$ satisfying moreover the
  following three conditions.
  \begin{description}
    \item[\namedlabel{ax:ha}{(HA)}] All fibers $\trip(I)$ are {Heyting algebras}.
    \item[\namedlabel{ax:u}{(U)}] Besides left adjoints $\ex_f$, all reindexing maps
  $f^*$ have right adjoints $\fa_f$.
    \item[\namedlabel{ax:po}{(PO)}] For every object $I\in\catc$ there exists a
    \emph{power object}, \ie an object $\tripower(I)\in\catc$ together with a
    predicate $\ve_I\in\trip(I\times\tripower(I))$ such that for all $J\in\catc$
    and $\varphi\in\trip(I\times J)$ there exists a (not necessarily unique)
    morphism
    $\nameof{\varphi}:J\to\tripower(I)$ satisfying
    $
    (I\times\nameof{\varphi})^*(\ve_I) = \varphi
    $.
  \end{description}

\begin{remarks}
\begin{anumerate}
\item Whereas we were careful to distinguish between elementary existential
doctrines and indexed frames because of different assumptions on the base
category and phrasings of the Beck--Chevalley condition, there is some ambiguity
around these issues for the notion of tripos, and different variants of the
definition exist in the literature. The above version is given by Pitts
in~\cite{pitts81}.
\item It follows from the presence of the right adjoint and from~\ref{ax:fr}
that the reindexing maps in a tripos preserve the Heyting algebra structure of
the fibers. 

Moreover, the Beck--Chevalley condition for $\fa$ follows from \ref{ax:bc} for
$\ex$ by an argument involving 2-categorical \emph{mates}, because the class of
squares for which we postulate \ref{ax:bc} is closed under transpose (which is
not the case \eg for existential doctrines). 
\end{anumerate}

\end{remarks}

\begin{examples}\label{ex:indexed-posets}
\begin{anumerate}
\item \label{ex:indexed-posets:canonical-indexing} The \emph{canonical indexing}
\begin{equation}
\fam(P):\catset\optopos
\end{equation}
 of a poset $P$ is the $\catset$-indexed poset
defined by $\fam(P)(I)=P^I$ with the pointwise ordering. Monotone maps
$f:P\to Q$ are in bijective correspondence with indexed monotone maps
$\fam(f):\fam(P)\to\fam(Q)$ between canonical indexings. The indexed
poset $\fam(P)$ is an indexed meet-semilattice \iff $P$ is a meet-semilattice,
and it is a tripos \iff it is an indexed frame \iff $P$ is a frame. Provided
domain and codomain have the appropriate structure, $\fam(f)$ is cartesian
\iff $f$ preserves finite meets, and $\fam(f)$ is a morphism of indexed frames
\iff $f$ is a frame morphism.
\item \label{ex:indexed-posets:sub} For every finite-limit category $\catc$ 
there is an indexed meet-semilattice 
\begin{equation}
\sub(\catc):\catc\optopos
\end{equation}
where for each $A\in\catc$, $\sub(\catc)(A)$ is the poset reflection of the full
subcategory of $\catc/A$ on monomorphism, and reindexing is given by pullbacks.

The category $\catc$ is a \emph{regular category}~\cite[A1.3]{elephant1} \iff
$\sub(\catc)$ is an indexed frame.
\end{anumerate}
\end{examples}

\section{The internal language}

The \emph{internal language} of an indexed frame $\tripcoppos$ is a many-sorted
first-order language in the sense of~\cite[Section~D1.1]{elephant2}. It is
generated from a signature whose sorts are the objects of $\catc$, whose
function symbols of arity $A_1,\dots ,A_n\to B$ are the morphisms of type
$A_1\times\dots\times A_n\to B$ in $\catc$, and whose relation symbols of arity
$A_1,\dots ,A_n$ are the elements of $\trip(A_1\times\dots\times
A_n)$\footnote{Note that the same predicate in $\trip$ can give rise to
different relation symbols, \eg $\rho\in\trip(A\times B)$ can both be viewed as
a binary relation symbol of arity $A,B$ and as unary relation symbol of
arity $A\times B$. The same is true for morphisms and function
symbols. The intended interpretation will always be clear from the context.}. 

Over this signature we consider \emph{terms}\,---\,which are built up from
sorted variables\footnote{We don't annotate variables with sorts as the sorts
are always clear from the context.} and function symbols, subject to matching
arities and sorts\,---\,and \emph{regular formulas}, which are generated from
\emph{atomic formulas} $\varphi(\vec t)$ (where $\varphi$ is a relation symbol
and $\vec t$ is a list of terms matching its arity) and $s=t$ (where $s$ and $t$
are terms of the same sort), using the connectives of conjunction~$\wedge$,
truth~$\top$, and existential quantification~$\exists$.

A \emph{context} is a list $\vec x$ of distinct variables. We say that a term
$t$ or formula $P$ is \emph{in context} $\vec x$, if all of its free variables
are contained in $\vec x$.

Instead of writing that  a term $t$ or formula $P$ is in context $\vec x$, we
also write $\cterm{\vec x}{t}$ is a \emph{term-in-context} and $\cterm{\vec
x}{P}$ is a \emph{\fic}.

We write $\sort{x}$ and $\sort{t}$ for the sort of a variable and a term,
respectively, and we use the shorthand
$\sort{x_1,\dots,x_n}\seq\sort{x_1}\times\dots\times\sort{x_n}$ for contexts.

The \emph{interpretation} of terms-in-context and formulas-in-context is defined
by structural induction by the clauses in Table~\ref{table:interpretation}.
In general, the interpretation of a term-in-context $(\vec x\csep t)$ is a
morphism $ \sem{t}_{\vec x}: \sort{\vec x} \to\sort{t}$ in $\catc$, and the
interpretation of a formula-in-context $(\vec x\csep P)$ is a predicate
$\sem{P}_{\vec x} \in \trip(\sort{\vec x })$.
\begin{table}
\centering{\fbox{
\begin{minipage}{0.77\linewidth}
\begin{align}
\sem{x_i}_{\vec x} &= \pi_i
\\
\sem{f(t_1,\dots,t_n)}_{\vec x}
&= f\circ\langle\sem{t_1}_{\vec x},\dots,\sem{t_n}_{\vec x}\rangle
\\
\sem{\varphi(t_1,\dots,t_n)}_{\vec x} 
&= \langle\sem{t_1}_{\vec x},
\dots,\sem{t_n}_{\vec x}\rangle^*(\varphi)
\\
\sem{t = u}_{\vec x} &= 
\langle \sem{t}_{\vec x}, \sem{u}_{\vec x}\rangle^*
(\exists_{\delta}\top
)
\\
\sem{P\wedge Q}_{\vec x} &= \sem{P}_{\vec x}
\wedge\sem{Q}_{\vec x}
\\
\sem{\top}_{\vec x} &= \top
\\
\sem{\exists y\qdot P}_{\vec x} &=
\exists_\pi(\sem{P}_{{\vec x},y})
\end{align}
In the fourth clause $\delta$ is the diagonal map
$\sort{t}\to\sort{t}\smtimes\sort{t}$, and in the last clause $\pi$ is the
projection
$\sort{\vec x}\smtimes\sort{y}\to\sort{\vec x}$.
\end{minipage}}}
\caption{Interpretation of the internal language}\label{table:interpretation}
\end{table}

When defining a predicate in an indexed frame by a formula in the internal 
language, we normally write
$\varphi(\vec x)\equiv P
$ instead of $
\varphi = \csem{\vec x}{P}
$.

The following standard lemmas are
verified by structural induction.
\begin{lemma}[Weakening]\label{lem:weak}
We have 
\begin{itemize}
  \item $\sem{t}_{{\vec x},y,\vec z} = \sem{t}_{{\vec x},{\vec z}}\circ\pi$
  \item $\sem{P}_{{\vec x},y,{\vec z}} =
  \pi^*(\sem{P}_{{\vec x},{\vec z}})$
\end{itemize}
for all terms-in-context $(\vec x,\vec z\csep t)$ and formulas-in-context $(\vec
x,\vec z\csep P)$, where $\pi:\sort{\vec x,y,\vec z}\to\sort{\vec x,\vec
z}$ is the obvious projection. \qed
\end{lemma}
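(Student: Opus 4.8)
The plan is to prove both equations simultaneously by structural induction on $t$ and $P$, establishing the term equation first since it feeds into the atomic cases of the formula equation. Throughout, $\pi:\sort{\vec x,y,\vec z}\to\sort{\vec x,\vec z}$ denotes the projection forgetting the $y$-slot, and for an auxiliary sort $\sort{w}$ I write $\pi\smtimes\id:\sort{\vec x,y,\vec z}\smtimes\sort{w}\to\sort{\vec x,\vec z}\smtimes\sort{w}$ for the projection that additionally carries the factor $\sort{w}$. The key geometric observation, recorded once here, is that the square with horizontal maps $\pi\smtimes\id$ and $\pi$ and vertical projections away from $\sort{w}$ is a pullback, being the product of $\pi$ with $\sort{w}$; this is what will feed Beck-Chevalley in the quantifier case.

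For terms, the base case is a variable $x_i$ drawn from $\vec x,\vec z$: by the variable clause of Table~\ref{table:interpretation} both sides are product projections, and the identity reduces to $\pi_i=\pi_i\circ\pi$, which holds by the universal property of the product. For a compound term $f(t_1,\dots,t_n)$ the function-symbol clause writes the interpretation as $f$ post-composed with the tuple $\langle\sem{t_1},\dots,\sem{t_n}\rangle$; applying the induction hypothesis to each $t_k$ and using that tupling commutes with post-composition by $\pi$ (again by the universal property of products) yields the claim.

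For formulas, the atomic cases combine the term equation with the contravariant functoriality $(g\circ h)^*=h^*\circ g^*$ of reindexing. In the relation case $\varphi(\vec t)$ the interpretation is $\langle\sem{t_1},\dots,\sem{t_n}\rangle^*\varphi$; substituting the term equation rewrites the tuple as $\langle\dots\rangle\circ\pi$, so the outer reindexing factors as $\pi^*\circ\langle\dots\rangle^*$, giving $\pi^*\sem{\varphi(\vec t)}_{\vec x,\vec z}$. The equality case $s=t$ is identical, with the fixed predicate $\exists_\delta\top$ playing the role of $\varphi$. The cases $P\wedge Q$ and $\top$ follow from the two induction hypotheses together with the fact that reindexing maps are morphisms of meet-semilattices, hence preserve $\wedge$ and $\top$.

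The substantive case is $\exists w\qdot P$, where I first $\alpha$-rename so that $w$ is distinct from $y$ and from every variable in $\vec x,\vec z$. By the existential clause the two sides are $\exists_{\pi'}\sem{P}_{\vec x,y,\vec z,w}$ and $\exists_{\pi''}\sem{P}_{\vec x,\vec z,w}$, where $\pi',\pi''$ project away the $\sort{w}$-factor. The induction hypothesis for $P$, applied with the inserted variable $y$ inside the enlarged context that also carries $w$, gives $\sem{P}_{\vec x,y,\vec z,w}=(\pi\smtimes\id)^*\sem{P}_{\vec x,\vec z,w}$, so it suffices to prove $\exists_{\pi'}\circ(\pi\smtimes\id)^*=\pi^*\circ\exists_{\pi''}$. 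This is precisely the Beck-Chevalley condition~\ref{ax:bc} for the pullback square identified in the first paragraph, with the left adjoints existing by~\ref{ax:ex}; it is the one point where the full structure of a regular hyperdoctrine---rather than a bare indexed meet-semilattice---is used. I expect the difficulty to be bookkeeping rather than mathematical: the single genuine idea is that weakening past a quantifier is governed by Beck-Chevalley, and the care needed lies in tracking the three roles of the contexts (the ambient $\vec x,\vec z$, the inserted $y$, and the bound $w$) so that the relevant square is manifestly a pullback and~\ref{ax:bc} applies verbatim.
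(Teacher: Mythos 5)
Your proof is correct and follows exactly the route the paper intends: the paper omits the argument entirely, merely noting that the lemma is ``verified by structural induction,'' and your induction supplies the expected details, with the Beck--Chevalley condition~\ref{ax:bc} applied to the product pullback square handling the only nontrivial case, the existential quantifier.
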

\begin{lemma}[Substitution] \label{lem:subs}
We have
\begin{itemize}
  \item $\sem{t[u/y]}_{\vec x}=\sem{t}_{{\vec x},y}\circ\langle\id_\sort{{\vec
  x}},\sem{u}_{{\vec x}}\rangle$
  \item
$\sem{P[u/y]}_{\vec x}=\langle\id_\sort{{\vec x}},\sem{u}_{{\vec
x}}\rangle^*(\sem{P}_{{\vec x},y})$ \end{itemize} for all formulas-in-context
$(\vec x,y\csep P)$ and terms-in-context $(\vec x,y\csep t)$, $(\vec
x\csep u)$ such that $\sort{y}=\sort{u}$.
\qed
\end{lemma}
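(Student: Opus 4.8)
The plan is to prove both claims simultaneously by structural induction on the term $t$ and the formula $P$. Throughout I would write $\sigma=\langle\id_{\sort{\vec x}},\sem{u}_{\vec x}\rangle:\sort{\vec x}\to\sort{\vec x,y}$ for the morphism interpreting the substitution, so that the two statements read $\sem{t[u/y]}_{\vec x}=\sem{t}_{\vec x,y}\circ\sigma$ and $\sem{P[u/y]}_{\vec x}=\sigma^*(\sem{P}_{\vec x,y})$. It is cleanest to prove a mild generalization in which the substituted variable $y$ is allowed to sit at an arbitrary position of the context rather than only at the end, with $\sigma$ replaced by the evident morphism inserting $\sem{u}$ into the $y$-slot; the stated lemma is the special case where $y$ is last, and the general form is exactly what the inductive step for $\exists$ will feed back into. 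The base cases are immediate: if $t$ is a variable occurring in $\vec x$ then $t[u/y]=t$ and both sides reduce to the projection onto the corresponding factor (note $\pi_i\circ\sigma=\pi_i$, since $\sigma$ is the identity on the $\sort{\vec x}$-factors), while if $t=y$ then $t[u/y]=u$ and, $\sem{y}_{\vec x,y}$ being the projection onto $\sort{y}$, we get $\sem{y}_{\vec x,y}\circ\sigma=\sem{u}_{\vec x}$ by the defining property of $\sigma$.

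The inductive cases for the remaining term constructor and for the atomic formulas are purely formal and use only two facts: the universal property of products, $\langle g_1\circ\sigma,\dots,g_n\circ\sigma\rangle=\langle g_1,\dots,g_n\rangle\circ\sigma$, and the functoriality of reindexing, $(h\circ\sigma)^*=\sigma^*\circ h^*$. For $t=f(t_1,\dots,t_n)$ the inductive hypotheses on the $t_i$ combine through the product property; for an atomic formula $\varphi(t_1,\dots,t_n)$ or an equation $s=t$ one applies functoriality once to rewrite $\sigma^*\circ\langle\cdots\rangle^*$ as $(\langle\cdots\rangle\circ\sigma)^*$ and then invokes the product property and the term hypothesis. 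The propositional clauses are governed by the \imsl structure of $\trip$: the cases for $\wedge$ and $\top$ hold because every reindexing map, in particular $\sigma^*$, preserves finite meets.

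The existential quantifier is the only substantial case, and it is where I expect the main obstacle to lie, since it couples the treatment of the bound variable with the need to commute a reindexing map past the left adjoint $\exists$. Writing $P=\exists z\qdot Q$ with $z$ renamed so as to differ from $y$ and to be absent from $u$, I would form the square
\[
\xymatrix@R-6mm@C-2mm{
\sort{\vec x,z}\ar[r]^{\pi'}\ar[d]_{\bar\sigma} & \sort{\vec x}\ar[d]^{\sigma}\\
\sort{\vec x,y,z}\ar[r]_{\pi} & \sort{\vec x,y}}
\]
in which $\pi,\pi'$ are the projections erasing $z$ and $\bar\sigma$ inserts $\sem{u}$ into the $y$-slot while fixing $z$; this is a pullback because $\pi$ is a product projection. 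The Beck-Chevalley condition~\ref{ax:bc} applied to this square gives $\sigma^*\circ\exists_\pi=\exists_{\pi'}\circ\bar\sigma^*$, whence $\sigma^*(\sem{\exists z\qdot Q}_{\vec x,y})=\exists_{\pi'}(\bar\sigma^*\sem{Q}_{\vec x,y,z})$. It then remains to recognize $\bar\sigma^*\sem{Q}_{\vec x,y,z}$ as $\sem{Q[u/y]}_{\vec x,z}$, which is precisely the generalized inductive hypothesis for $Q$ in the context $(\vec x,y,z)$, with $\bar\sigma$ the substitution morphism for $u$ weakened along $z$. Here Lemma~\ref{lem:weak} is needed to certify that weakening $u$ by $z$ leaves its interpretation unchanged up to the projection $\pi'$, so that $\bar\sigma$ really is that substitution morphism; and the reconciliation of the two context orderings -- the quantifier clause appends $z$ last, whereas the substituted variable $y$ now sits in the middle -- is the fiddly bookkeeping that motivates proving the generalized statement in the first place. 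Assembling the cases completes the induction.
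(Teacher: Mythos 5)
Your proof is correct and takes the same route the paper intends: the paper dismisses Lemma~\ref{lem:subs} as a standard structural induction, and your write-up simply supplies the details, correctly identifying the two genuinely non-formal points — the need to generalize to substitution at an arbitrary context position, and the use of the Beck--Chevalley condition~\ref{ax:bc} on the pullback square relating $\sigma$ and $\bar\sigma$ to push the reindexing past $\exists_\pi$ in the quantifier case. No gaps.
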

Terms-in-context $\cterm{\vec x}{t}$ and $\cterm{\vec x}{u}$ (or \fics
$\cterm{\vec x}{P}$ and $\cterm{\vec x}{Q}$) are called \emph{semantically
equal}, if $\csem{\vec x}{t}=\csem{\vec x}{u}$ (or $\csem{\vec x}{P}=\csem{\vec
x}{Q}$).

The following lemma justifies \emph{local rewriting}.  Because of the presence
of quantifiers it cannot be deduced from the substitution lemma, but it can be
proven straightforwardly by structural induction.
\begin{lemma}[Congruence]\label{lem:cong}
Semantic equality of terms and formulas in-con\-text
is 
preserved when
replacing sub\tics (or
sub\fics) of a \fic $\cterm{\vec x}{P}$ by semantically equal ones.\qed
\end{lemma}

A \emph{judgment} in the internal language is an expression of the form
$\Gamma\ent_{\vec x}Q$, where $\Gamma \equiv P_1,\dots,P_n$
is a list of formulas
in context ${\vec x}$, and $Q$ is a formula in context $\vec x$.
We say that the judgment is \emph{valid} (or \emph{holds}), if 
\begin{equation}
  \sem{P_1}_{\vec x}\wedge\dots\wedge\sem{P_n}_{\vec x}
  \leq \sem{Q}_{\vec x}
  \qtext{in} \trip(\sort{{\vec x}})\;.
\end{equation}
For convenience introduce the abbreviation 
$\sem{\Gamma}_{\vec x}=\sem{P_1}_{\vec x}\wedge\dots\wedge\sem{P_n}_{\vec x}$
for the left-hand side of this inequality.

A \emph{deduction} is an $(n+1)$-tuple of judgments $(\mch_1,\dots,\mch_n;\mck)$
for some $n\geq 0$, where the $\mch_i$ are called \emph{hypotheses} and $\mck$
the \emph{conclusion}. The deduction is called \emph{admissible} if the validity
of the hypotheses implies the validity of the conclusion. As is customary, we
often write deductions with a horizontal line as 
\begin{equation}
\ax{\mch_1\quad\dots\quad\mch_n}\ui{\mck}\drap\;\;.
\end{equation}

A \emph{rule} is a `deduction scheme', \ie a deduction containing placeholders
for terms and formulas and contexts. A rule is called admissible if all
syntactically-correct instantiations of these placeholders yield admissible
deductions.

The following \emph{soundness theorem} is straightforward and holds in fact for
the internal language of any elementary existential doctrine.
\begin{theorem}[Soundness]\label{thm:sound} The rules of regular logic in
Table~\ref{table:reg-rules} are admissible. \hfill$_\blacksquare$
\end{theorem}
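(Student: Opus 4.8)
The plan is to argue by a single case distinction over the inference rules of regular logic collected in Table~\ref{table:reg-rules}: for each rule I would assume that its premises are valid and deduce that its conclusion is valid, reading throughout a judgment $P_1,\dots,P_n\ent_{\vec x}Q$ as the fibrewise inequality $\sem{P_1}_{\vec x}\wedge\dots\wedge\sem{P_n}_{\vec x}\leq\sem{Q}_{\vec x}$ in $\trip(\sort{\vec x})$. Because the rules contain no nested subderivations beyond their displayed premises, no outer induction on derivations is required; all the inductive content has already been isolated in the Weakening, Substitution and Congruence Lemmas (\ref{lem:weak}, \ref{lem:subs}, \ref{lem:cong}), which I would invoke freely. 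The Beck--Chevalley condition \ref{ax:bc} then enters only indirectly, through the proofs of those lemmas, while \ref{ax:ex} and \ref{ax:fr} are used directly below.

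The structural and propositional rules are pure order theory in the fibres. The axiom rule, weakening, contraction and cut all reduce to the facts that each $\trip(\sort{\vec x})$ is a meet-semilattice and that $a\leq b$ holds iff $a\wedge b=a$; for cut, the first premise gives $\bigwedge_i\sem{P_i}_{\vec x}\leq\sem{P}_{\vec x}$, hence $\bigwedge_i\sem{P_i}_{\vec x}=\bigl(\bigwedge_i\sem{P_i}_{\vec x}\bigr)\wedge\sem{P}_{\vec x}$, which the second premise bounds above by $\sem{Q}_{\vec x}$. The rules for $\top$ and $\wedge$ are immediate from the clauses $\sem{\top}_{\vec x}=\top$ and $\sem{P\wedge Q}_{\vec x}=\sem{P}_{\vec x}\wedge\sem{Q}_{\vec x}$ and the universal property of binary meets. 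The substitution rule is handled by the Substitution Lemma: applying the monotone map $\langle\id,\sem{u}_{\vec x}\rangle^*$ to the premise and using that reindexing preserves finite meets (so that it distributes over the conjunction on the left) turns the premise into the conclusion once each $\sem{P_i[u/y]}_{\vec x}$ and $\sem{Q[u/y]}_{\vec x}$ is rewritten via \ref{lem:subs}.

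The quantifier and equality rules are where the hyperdoctrine structure is genuinely used. For $\exists$-introduction I would use the unit of the adjunction $\exists_\pi\dashv\pi^*$ provided by \ref{ax:ex}: from $\sem{P}_{\vec x,y}\leq\pi^*\exists_\pi\sem{P}_{\vec x,y}$, reindexing along $\langle\id,\sem{t}\rangle$ and using $\pi\circ\langle\id,\sem{t}\rangle=\id$ gives $\sem{P[t/y]}_{\vec x}\leq\exists_\pi\sem{P}_{\vec x,y}=\sem{\exists y\qdot P}_{\vec x}$. The equality rules follow in the same spirit from the clause $\sem{t=u}_{\vec x}=\langle\sem{t}_{\vec x},\sem{u}_{\vec x}\rangle^*(\exists_\delta\top)$: reflexivity is obtained by applying $\sem{t}_{\vec x}^*$ to the unit inequality $\top\leq\delta^*\exists_\delta\top$ (and using that reindexing preserves $\top$), while the Leibniz rule unwinds to a short computation with $\exists_\delta$, the counit of $\exists_\delta\dashv\delta^*$, and \ref{ax:fr}.

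I expect the $\exists$-elimination rule to be the main obstacle, as it is the only place where the Frobenius condition \ref{ax:fr} is essential and where the eigenvariable side-condition must be matched exactly. Given the premise $\Gamma,P\ent_{\vec x,y}Q$ with $y$ not free in $\Gamma$ or $Q$, the Weakening Lemma rewrites the interpretations of $\Gamma$ and $Q$ as $\pi^*\sem{\Gamma}_{\vec x}$ and $\pi^*\sem{Q}_{\vec x}$ (writing $\sem{\Gamma}_{\vec x}$ for the conjunction of its members), so the premise reads $\pi^*\sem{\Gamma}_{\vec x}\wedge\sem{P}_{\vec x,y}\leq\pi^*\sem{Q}_{\vec x}$. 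Applying the monotone $\exists_\pi$, invoking \ref{ax:fr} to compute $\exists_\pi(\pi^*\sem{\Gamma}_{\vec x}\wedge\sem{P}_{\vec x,y})=\sem{\Gamma}_{\vec x}\wedge\exists_\pi\sem{P}_{\vec x,y}$, and then using the counit $\exists_\pi\pi^*\sem{Q}_{\vec x}\leq\sem{Q}_{\vec x}$ yields exactly $\sem{\Gamma}_{\vec x}\wedge\sem{\exists y\qdot P}_{\vec x}\leq\sem{Q}_{\vec x}$. The delicate point is that Frobenius is precisely the identity needed to pull the context $\Gamma$ across the existential, and that this works only because the side-condition guarantees that $\sem{\Gamma}_{\vec x}$ occurs in the form $\pi^*\sem{\Gamma}_{\vec x}$; confirming that the rules in Table~\ref{table:reg-rules} impose exactly this side-condition is the one step I would verify with care.
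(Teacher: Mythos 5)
The paper states this theorem without proof (the inline $_\blacksquare$ signals it is regarded as routine), and your rule-by-rule verification --- adjunction units/counits for $\exists$ and $=$, Frobenius \ref{ax:fr} for $\exists$-elimination, and Lemmas~\ref{lem:weak}--\ref{lem:cong} for the bookkeeping --- is exactly the standard argument being alluded to, so your proposal matches the intended proof. Two cosmetic quibbles: Table~\ref{table:reg-rules} contains no weakening/contraction/cut/substitution rules, so those cases of yours are superfluous; and the Beck--Chevalley condition \ref{ax:bc} is in fact also used \emph{directly} in the equality-elimination (Leibniz) case --- to relate $\pi^*\exists_\delta\top$ to the fibred equality over a nontrivial context --- not only inside the proofs of the Weakening and Substitution Lemmas.
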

\begin{table}
\centering{\fbox{
\begin{minipage}{0.8\linewidth}
\begin{tabular}{ccc}\\[-2mm]
\ax{\phantom{|}}\ui{P_1,\dots,P_n\ent_{\vec x}P_i}\drap &
\ax{\Gamma\ent_{\vec x,y} R[t/y]}\ui{\Gamma\ent_{\vec x}\ex y\qdot
 R}\drap &
\ax{\Gamma\ent_{\vec x}\ex y\qdot  R} \ax{\Gamma, R\ent_{\vec x,y}P}
\bi{\Gamma\ent_{\vec x}P}\drap
\\[5mm]
\ax{\phantom{|}}\ui{\Gamma\ent_{\vec x}\top}\drap 
&
\ax{\phantom{|}}\ui{\Gamma\ent_{\vec x}t=t}\drap 
&
\ax{\Gamma\ent_{\vec x} R[s/y]}
\ax{\Gamma\ent_{\vec x}s=t}
\bi{\Gamma\ent_{\vec x} R[t/y]}\drap
\\[5mm]
\ax{\Gamma\ent_{\vec x}P\wedge Q}
\ui{\Gamma\ent_{\vec x}P}\drap
&
\ax{\Gamma\ent_{\vec x}P\wedge Q}
\ui{\Gamma\ent_{\vec x} Q}\drap
& 
\ax{\Gamma\ent_{\vec x}P} \ax{\Gamma\ent_{\vec x} Q}
\bi{\Gamma\ent_{\vec x}P\wedge Q}\drap
\end{tabular}\\[3mm]
\end{minipage}
}}
\caption{The rules of regular logic}\label{table:reg-rules}
\end{table}

On the other hand, the next lemma relies on the stronger version of the 
Beck--Chevalley condition postulated in indexed frames.
\begin{lemma}\label{lem:pullback-rule}
Assume that 
\begin{equation}\label{eq:pullback}
\begin{tikzcd}
    D   \ar[r,"k"']\ar[d,"h"']
&   B   \ar[d,"g"]
\\  A   \ar[r,"f"]
&   C
\end{tikzcd}
\end{equation}
is a pullback square in $\catc$, and $\Gamma$ and $ Q$ are respectively a list
of formulas and a formula in the same context $\vec x,y,z$, with $\sort{y} = A$
and $\sort{z} = B$.
Then the deduction 
\begin{equation}\label{eq:ded}
\ax{\Gamma[hp/y,kp/z]\ent_{\vec x,p} Q[hp/y,kp/z]}
\ui{\Gamma,fy=gz\ent_{\vec x,y,z} Q}
\drap
\end{equation}
is admissible.
\end{lemma}
\begin{proof}
Let $X=\sort{\vec x}$. The assumption that~\eqref{eq:pullback} is a pullback
implies that the square
\begin{equation}\begin{tikzcd}
    X\times D
        \ar[r,"v"']
        \ar[d,"m"']
&	C
        \ar[d,"\delta"]
\\	X\times A \times B
        \ar[r,"u"]
&	C\times C
\end{tikzcd}
\qquad
\begin{matrix}
m = X\times\angs{h,k}\\
u = \angs{f\circ\pi_2,g\circ\pi_3}\\
v = g\circ k \circ \pi_2 = f\circ h \circ \pi_2
\end{matrix}
\end{equation}
is a pullback as well. The hypothesis of the deduction~\eqref{eq:ded} unfolds
into the inequality 
\begin{equation}\label{eq:hyp}
m^*\csem{\vec x,y,z}{\Gamma}\leq m^*\csem{\vec x,y,z}{Q}.
\end{equation}
For the interpretation of the left-hand side of the conclusion we have
\begin{align}
\csem{\vec x,y,z}{\Gamma}\wedge\csem{\vec x,y,z}{fy=gz}
&=\csem{\vec x,y,z}{\Gamma}\wedge u^*\,\ex_\delta\,\top\\
&=\csem{\vec x,y,z}{\Gamma}\wedge \,\ex_m\,\top&&\text{by \ref{ax:bc}}\\
&=\ex_m\,m^*\,\csem{\vec x,y,z}{\Gamma}&&\text{by \ref{ax:fr}},
\end{align}
and we see that the last term is less-or-equal $\csem{\vec x,y,z}{Q}$ \iff
the inequality \eqref{eq:hyp} holds.
\end{proof}
\begin{blank}{Interpreting non-regular connectives in triposes}
Triposes admit a richer internal language than general indexed frames, since
they can model \emph{all} connectives of intuitionistic first-order logic, not
only the regular fragment. The \emph{non-regular} connectives $\fa,
\vee,\bot,\imp$ are interpreted by the clauses
\begin{align}
\csem{\vec x}{\fa y\qdot P}&= \fa_\pi\csem{\vec x,y}{P}&
\sem{P\vee Q}_{\vec x}&= \sem{P}_{\vec x}\vee\csem{\vec x}{ Q}\\
\sem{P\imp Q}_{\vec x}&= \sem{P}_{\vec x}\imp\csem{\vec x}{ Q}&
\csem{\vec x}{\bot}&=\bot
\end{align}
where again, $\pi$ is the appropriate projection and `$\vee$', `$\bot$', and
`$\imp$' on the right-hand sides denote binary join, least element, and Heyting
implication in the Heyting algebra $\trip(\sort{\vec x})$. The augmented
language satisfies the \emph{Weakening}, \emph{Substitution}, and
\emph{Congruence Lemmas}~\ref{lem:weak}, \ref{lem:subs} and \ref{lem:cong}
without modification, and the \emph{Soundness Theorem}~\ref{thm:sound} for the
set of rules in Table~\ref{table:reg-rules} and the following rules for the 
new connectives.
\vspace{-10pt} 
\begin{equation}
\setlength\extrarowheight{14pt}
\setlength{\tabcolsep}{12pt}
\begin{tabular}{ccc}
\ax{\cjudg{\vec x,y}{\Gamma}{ R}}
\ui{\cjudg{\vec x}{\Gamma}{\fa y\qdot R}}
\drap
&
\ax{\cjudg{\vec x}{\Gamma,P}{ Q}}
\ui{\cjudg{\vec x}{\Gamma}{P\imp Q}}
\drap
&
\ax{\cjudg{\vec x}{\Gamma}{P}}
\ui{\cjudg{\vec x}{\Gamma}{P\vee Q}}
\drap
\\
\ax{\cjudg{\vec x}{\Gamma}{\fa y\qdot R}}
\ui{\cjudg{\vec x}{\Gamma}{ R[t/y]}}
\drap
&
\ax{\cjudg{\vec x}{\Gamma}{P\imp Q}}
\ax{\cjudg{\vec x}{\Gamma}{P}}
\bi{\cjudg{\vec x}{\Gamma}{ Q}}
\drap
&
\ax{\cjudg{\vec x}{\Gamma}{ Q}}
\ui{\cjudg{\vec x}{\Gamma}{P\vee Q}}
\drap
\\
\ax{\cjudg{\vec x}{\Gamma}{\bot}}
\ui{\cjudg{\vec x}{\Gamma}{P}}
\drap
&
\multicolumn{2}{c}{
\ax{\cjudg{\vec x}{\Gamma}{P\vee Q}}
\ax{\cjudg{\vec x}{\Gamma,P}{S}}
\ax{\cjudg{\vec x}{\Gamma, Q}{S}}
\ti{\cjudg{\vec x}{\Gamma}{S}}
\drap
}
\end{tabular}
\end{equation}
Finally, to account for the power objects in triposes we could extend the
language by a higher order term former for subset comprehension, or
alternatively express comprehension as an axiom
scheme~\cite[Axiom~4.1]{pitts2002}. We don't need this here\,---\,our only use of
internal language for triposes is in the proof of
Theorem~\ref{thm:enough-fib-cofib} and there the first order language is
sufficient.
\end{blank}

\section{\hspace{-2pt}The category of partial equivalence relations and
compatible maps}

\begin{definition}\label{def:fct} Let $\fifa:\catb\op\to\catpos$ be an indexed
meet-semilattice on a finite-product category. The category $\catb\angs{\fifa}$
is defined as follows.
\begin{itemize}
 \item Objects are pairs $(A\in\catb,\rho\in\fifa(A\times A))$ such that 
 the judgments
 \begin{description}
  \item[\namedlabel{ax:sym}{(sym)}]
$\rho(x,y)\ent_{x,y} \rho(y,x)$
  \item[\namedlabel{ax:trans}{(trans)}] $\rho(x,y),\rho(y,z)\ent_{x,y,z}\rho(x,z)$
\end{description}
hold in $\fifa$.
 \item Morphisms from $(A,\rho)$ to $(B,\sigma)$ are maps $f:A\to B$ in $\catb$
 such that
 \begin{description}
 \item[\namedlabel{ax:compat}{(compat)}]
$\rho(x,y)\ent_{x,y}\sigma(fx,fy)$
\end{description}
holds in $\fifa$.
 \item Composition and identities are inherited from $\catb$.
\end{itemize}
\end{definition}
 
\begin{notationandterminology}
\begin{anumerate}
\item 
If $\rho\in\fifa(A\times A)$ satisfies \ref{ax:sym} and {\ref{ax:trans}}, we 
call it a \emph{partial equivalence relation}. If $f:A\to B$ satisfies \ref{ax:compat}, we say that it is \emph{compatible} with $\rho$ and $\sigma$.
\item
We  write $\rho_0 :=\delta^*_A\rho$ for the reindexing of a partial equivalence
relation along the diagonal $\delta_A:A\to A\times A$, and call it the 
\emph{support} of $\rho$.
\item
In the internal language we simply write $\rho x$ instead of $\rho_0x$ or
$\rho(x,x)$.
\end{anumerate}
\end{notationandterminology}

\begin{lemma} \label{lem:fct-props} Let $\fifa:\catcpos$ be an indexed
meet-semilattice on a finite-limit category $\catc$.
\begin{inumerate}
\item The forgetful functor
$U : \catc\angs{\fifa}\to\catc$ 
has a right adjoint $\nabla$.
\item The category $\catc\angs{\fifa}$ {has} finite limits and $U$ preserves them.
\item\label{lem:fct-props-iso} A morphism $f:\aro\to\bsi$ is an isomorphism in
$\catc\angs{\fifa}$ \iff $f$ is an isomorphism in $\catc$ and $(f\times
f)^*\sigma=\rho$.
\end{inumerate}
\end{lemma}

\begin{proof}
The right adjoint is given by $\nabla(A)=(A,\top)$.
The terminal object in $\catc\angs{\fifa}$ is $(1,\top)$. A pullback of 
$\aro\xrightarrow{f}\cta\xleftarrow{g}\bsi$ is given by
\begin{equation}
\begin{tikzcd}
    (D,\rho\brprod_C\sigma)
        \ar[r,"k"']
        \ar[d,"h"']
&   \bsi
        \ar[d,"g"]
\\  \aro
        \ar[r,"f"]
&   \cta
\end{tikzcd}
\vspace{-10pt}
\end{equation}
where 
$
\begin{tikzcd}[sep = small]
 D\ar[r,"k"']\ar[d,"h"'] & B\ar[d,"g"]\\
 A\ar[r,"f"] & C
\end{tikzcd}
$
is a pullback in $\catc$ and
$
 (\rho\brprod_C\sigma)(p,q) \equiv \rho(hp,h q)\wedge\sigma(k p,k q).
$ 
For the third claim, the necessity of the conditions becomes obvious by 
considering an inverse to $f:\aro\to\bsi$. Conversely, the conditions also 
allow to construct this inverse.
\end{proof}
\begin{remark}
Setting $\cta=(1,\top)$ in the construction of the pullback, we get a
representation of binary products as $\aro\times(B,\sigma)=(A\times
B,\rho\brprod\sigma)$ with $(\rho\brprod\sigma)(u,v)\equiv
\rho(\pi_1u,\pi_1v)\wedge\sigma(\pi_2u,\pi_2v)$.
\end{remark}

\begin{definition}\label{def:descent-pred} Given an indexed meet-semilattice
$\fifa:\catbpos$ on a finite-product category and an object $\aro\in\bana$, we
say that $\varphi\in\fifa(A)$ is a \emph{$\rho$-descent predicate} if the
judgments
\begin{equation}
\varphi x\ent_x\rho x\qqtext{and}\varphi x,\rho(x,y)\ent_{x,y}\varphi y
\end{equation}
hold in $\fifa$. 
\end{definition}
\begin{remarks}\label{rem:desc}
\begin{anumerate}
\item\label{rem:desc:ipos} For every indexed meet-semilattice $\fifa:\catbpos$
on a finite-product category, we can define an indexed meet-semilattice
$\widetilde{\fifa}:\bana\op\to\catpos$ where the elements of
$\widetilde{\fifa}\aro$ are the $\rho$-descent predicates.
In~\cite[Theorem~4.8]{maietti2012elementary}, $\widetilde{\fifa}$ is
characterized as a free completion of $\fifa$ with \emph{comprehension} and
\emph{descent quotients}.
\item In~\cite[pg.~67]{vanoosten2008realizability} van Oosten writes `strict 
relation on $\aro$' for what we call $\rho$-descent predicate. 
In~\cite{frey2013fibrational}, the terminology `predicate compatible with 
$\rho$' is used.
\end{anumerate}
\end{remarks}

\section{The category \texorpdfstring{$\canp$}{C<P>} as a category of fibrant
objects}

In this section we show that for every indexed frame $\tripcoppos$, the category
$\canp$ can be equipped with the structure of a `category of fibrant objects' in
a natural way. We start by recalling the definition
from~\cite{brown1973abstract}.
\begin{definition}
 A \emph{category of fibrant objects} is a category $\mcc$ with finite 
 products, together with two distinguished
 classes $\cfib,\cweq\subs\mor(\mcc)$ of morphisms whose elements are called
 \emph{fibrations} and \emph{weak equivalences}, respectively. Morphisms in 
 $\cfib\cap\cweq$ are called \emph{trivial fibrations}. These classes are 
 subject  to the following axioms.
 \begin{description}
  \item[\namedlabel{ax:a}{(A)}]
  All isomorphisms are weak equivalences, and for any composable pair
  $A\xrightarrow{f}B\xrightarrow{g}C$, if either two of the three morphisms $f$,
  $g$, and $gf$ are weak equivalences, then so is the third.
  \item[\namedlabel{ax:b}{(B)}]
  The class of fibrations contains all isomorphisms and is closed under composition.
  \item[\namedlabel{ax:c}{(C)}] 
  Pullbacks of fibrations along arbitrary maps exist and are fibrations.
  Pullbacks of trivial fibrations are trivial fibrations.
  \item[\namedlabel{ax:d}{(D)}] 
  For any $X\in\mcc$ there exists a \emph{path object}, i.e.\ a 
  factorization 
  \begin{equation}
   X\xrightarrow{s} P(X) \xrightarrow{d=\langle d_0,d_1\rangle} X\times X
  \end{equation}
  of the diagonal with $s\in\cweq$ and $d\in\cfib$.
  \item[\namedlabel{ax:e}{(E)}] 
  For any $X\in\mcc$, the map $X\to 1$ is a fibration.
 \end{description}
\end{definition}
For the remainder of the section let $\tripcoppos$ be a fixed indexed frame.
\begin{definition}\label{def:fib-equiv}
A morphism $f:\aro\to\bsi$ in $\canp$ is a \emph{fibration} if the judgment
\begin{description}
 \item[\namedlabel{ax:fib}{(fib)}]
$\rho x,\sigma(fx,u)\ent_{x,u}\exists y\qdot \rho(x,y)\wedge fy=u$
\end{description}
holds in $\trip$. It is a \emph{weak equivalence} if the judgments
\begin{description}
 \item[\namedlabel{ax:inj}{(inj)}]
$\rho x,\sigma(fx,fy),\rho y\ent_{x,y}\rho(x,y)$ 
 \item[\namedlabel{ax:esurj}{(esurj)}]
$\sigma u\ent_u\exists x\qdot \rho x\wedge \sigma(fx,u)$
\end{description}
hold in $\trip$.
\end{definition}
\begin{lemma}\label{lem:tfib-eq-inj-surj}
$f:\aro\to\bsi$ is a trivial fibration if and only if \ref{ax:inj} and
\begin{description}
\item[\namedlabel{ax:surj}{(surj)}]
$\sigma u\ent_{u}\exists x\qdot \rho x \wedge f x=u$
\end{description}
hold in $\trip$.
\end{lemma}
\begin{proof}
We show the following implications.
\begin{itemize}
\item 
\ref{ax:surj} $\implies$ \ref{ax:esurj}
\item 
\ref{ax:inj}, \ref{ax:surj} $\implies$ \ref{ax:fib}
\item 
\ref{ax:esurj}, \ref{ax:fib} $\implies$ \ref{ax:surj}
\end{itemize}
The proof is carried out in the internal language. In the following enumerated
lists, each line represents an admissible deduction, with hypotheses and
conclusion separated by the arrow `$\implies$'. In each line, the hypotheses are
known to be valid either because they are assumptions, or because they were
established in the previous lines. The admissibility of the individual
deductions follows from the rules of regular logic (Table~\ref{table:reg-rules})
by elementary logical reasoning.

\medskip\noindent
Implication \ref{ax:inj}, \ref{ax:surj} $\implies$ \ref{ax:fib}:
\begin{enumerate}
\item \label{itm:aaa}
\derline{\ref{ax:sym}, \ref{ax:trans}}
{$\sigma(fx,u)\ent_{x,u}\sigma u$}
\item\label{itm:zza}
\derline{\ref{itm:aaa}, \ref{ax:surj}}
{$\sigma(fx,u)\ent_{x,u}\exists y\qdot \rho y\wedge fy=u$}
\item\label{itm:zze}
\derline{\ref{ax:inj}}{$\rho x ,\sigma(fx,u),\rho y ,fy =u\ent_{x,y,u}
\rho(x,y)$}
\item\label{itm:zzc}
\derline{\ref{itm:zze}%
}
{$\rho x ,\sigma(fx,u),\rho y ,fy =u\ent_{x,y,u}\rho(x,y)\wedge fy=u$}
\item\label{itm:zzb}
\derline{\ref{itm:zzc}}
{$\rho x,\sigma(fx,u),\rho y,fy=u\ent_{x,y,u}\exists y\qdot\rho(x,y)\wedge fy=u$}
\item \derline{\ref{itm:zza}, \ref{itm:zzb}}
{$\rho x ,\sigma(fx,u)\ent_{x,u}\exists y\qdot \rho(x,y)\wedge fy=u$}
\end{enumerate}
Implication \ref{ax:surj} $\implies$ \ref{ax:esurj}:
\begin{enumerate}
 \item \label{itm:wia}
 \derline{\ref{ax:compat}}
 {$\rho x\ent_x\sigma (fx)$}
 \item \label{itm:wib}
 \derline{\ref{itm:wia}}
 {$\rho x, fx=u\ent_{x,u}\sigma(fx,u)$}
 \item \label{itm:wic}
 \derline{\ref{itm:wib}}
 {$\rho x, fx=u\ent_{x,u}\rho x\wedge\sigma(fx,u)$}
\item \label{itm:wid}
 \derline{\ref{itm:wic}}
 {$\exists x\qdot\rho x\wedge fx=u\ent_u\exists x\qdot\rho x\wedge\sigma(fx,u)$}
 \item \label{itm:wie}
 \derline{\ref{itm:wid}, \ref{ax:surj}}
 {$\sigma u\ent_u\exists x\qdot \rho x\wedge \sigma(fx,u)$}
\end{enumerate}
Implication \ref{ax:esurj}, \ref{ax:fib} $\implies$ \ref{ax:surj}:
\begin{enumerate}
 \item \label{itm:jka}
 \derline{\ref{ax:sym}, \ref{ax:trans}}
 {$\rho(x,y)\ent_{x,y,u}\rho y$}
 \item \label{itm:jkb}
 \derline{\ref{itm:jka}}
 {$\rho(x,y),fy=u\ent_{x,y,u}\rho y \wedge fy=u$}
 \item \label{itm:jkc}
 \derline{\ref{itm:jkb}}
 {$\exists y\qdot\rho(x,y)\wedge fy=u\ent_{x,u}\exists y\qdot \rho y \wedge 
 fy=u$}
 \item \label{itm:jkd}
 \derline{\ref{itm:jkc}, \ref{ax:fib}}
 {$\rho x,\sigma(fx,u)\ent_{x,u}\exists y\qdot \rho y \wedge fy=u$}
 \item \label{itm:jke}
 \derline{\ref{itm:jkd}, \ref{ax:esurj}}
 {$\sigma u\ent_{u}\exists y\qdot \rho y \wedge fy=u$}
\end{enumerate}
\end{proof}
\begin{remark}\label{rem:surj-reform}
Stated variable-freely, the condition \ref{ax:surj} reduces to the inequality
$\sigma_0\leq\ex_f\rho_0$, and since the reverse inequality follows
from~\ref{ax:compat} this is equivalent to the equality $\sigma_0=\ex_f\rho_0$.
\end{remark}
\begin{definition}\label{def:restrictper} Given an object $\aro\in\canp$ and a
$\rho$-descent predicate $\varphi\in\trip(A)$, we define the \emph{restriction}
$\rho|_\varphi$ of $\rho$ to $\varphi$ by 
$(\rho|_\varphi)(x,y)\;\equiv\;\rho(x,y)\wedge\varphi(x)$.
\end{definition}
The following is easy to see.
\begin{lemma}\label{lem:restrict-fib}
If $\rho\in\trip(A\times A)$ is a partial equivalence relation and
$\vf\in\trip(A)$ is a $\rho$-descent predicate, then the restriction $\rho|_\vf$
is a partial equivalence relation, and the identity $\id:A\to A$ in $\catc$
induces a monomorphism 
\begin{equation}
  (A,\rho|_\varphi)\to\aro\end{equation}
in $\canp$ which is a fibration. \qed
\end{lemma}
\begin{theorem}\label{thm:cofo}
$\canp$ with the classes of fibrations and weak equivalences 
from Definition~\ref{def:fib-equiv} is a category of fibrant objects.
\end{theorem}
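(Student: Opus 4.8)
The plan is to verify the five axioms \ref{ax:a}--\ref{ax:e} in turn, each as a short derivation in the internal language in the style of Lemma~\ref{lem:tfib-eq-inj-surj}. Much of the ambient structure is already available: Lemma~\ref{lem:fct-props} supplies finite products (indeed all finite limits, preserved by $U$) and, crucially, the explicit pullback together with its relation $\rho\brprod_C\sigma$. Axiom \ref{ax:e} is then immediate, since for $\aro\to(\terminal,\top)$ the condition \ref{ax:fib} reads $\rho x,\top\ent_{x,u}\ex y\qdot\rho(x,y)\wedge fy=u$, which holds by taking $y:=x$, as any two maps into $\terminal$ agree.

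For \ref{ax:a}, that $\cweq$ contains all isomorphisms follows from Lemma~\ref{lem:fct-props}(3): an iso $f$ satisfies $(f\times f)^*\sigma=\rho$, so $\sigma(fx,fy)$ and $\rho(x,y)$ are interderivable over the supports, giving \ref{ax:inj} at once and \ref{ax:esurj} by transporting along the inverse. For two-out-of-three it helps to read \ref{ax:inj} together with \ref{ax:compat} as saying that $f$ is ``fully faithful'' (that is, $\rho(x,y)$ is equivalent to $\rho x\wedge\rho y\wedge\sigma(fx,fy)$) and \ref{ax:esurj} as ``essential surjectivity''; the three implications are then the usual arguments that equivalences of setoids are closed under two-out-of-three, each obtained by chaining the relevant essential-surjectivity and fully-faithfulness judgments through \ref{ax:sym}, \ref{ax:trans} and \ref{ax:compat}. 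Axiom \ref{ax:b} is similar and lighter: isomorphisms satisfy \ref{ax:fib} as in \ref{ax:e}, and for a composite $\aro\xrightarrow{f}\bsi\xrightarrow{g}\cta$ of fibrations one feeds the witness produced by \ref{ax:fib} for $g$ into \ref{ax:fib} for $f$, matching supports via \ref{ax:compat}.

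The core of \ref{ax:c} is pullback-stability of fibrations, and here Lemma~\ref{lem:pullback-rule} is the essential device. Pullbacks exist by Lemma~\ref{lem:fct-props}; writing the pullback of $\aro\xrightarrow{f}\cta\xleftarrow{g}\bsi$ as $(D,\rho\brprod_C\sigma)$ with legs $h,k$, I would show that $h$ is a fibration whenever $g$ is. Using Lemma~\ref{lem:pullback-rule} to replace the variable ranging over $D$ by a pair $(y,z)$ subject to $fy=gz$, the fibration condition for $h$ amounts to producing, from $\rho y$, $\sigma z$ and $\rho(y,a)$, an element $z'$ with $\sigma(z,z')$ and $gz'=fa$. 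Since \ref{ax:compat} for $f$ turns $\rho(y,a)$ into $\tau(fy,fa)=\tau(gz,fa)$ (using the proviso $gz=fy$), such a $z'$ is exactly what \ref{ax:fib} for $g$ delivers, and the pair $(a,z')$ is the required element of $D$. Stability of trivial fibrations is handled through the reformulation \ref{ax:inj}$+$\ref{ax:surj} of Lemma~\ref{lem:tfib-eq-inj-surj}, both conditions being stable by the same kind of argument (with \ref{ax:surj} read via Remark~\ref{rem:surj-reform}).

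The one genuinely creative step is the path object \ref{ax:d}, which I expect to be the main obstacle. The naive choice $X^I:=\aro$ with $s:=\id$ fails, since the diagonal $\aro\to\aro\times\aro$ does not satisfy \ref{ax:fib}. Instead I would take
\[
  X^I \;:=\; (A\times A,\,\pi),\qquad
  \pi\bigl((x_1,x_2),(y_1,y_2)\bigr)\equiv\rho(x_1,y_1)\wedge\rho(x_2,y_2)\wedge\rho(x_1,x_2),
\]
so that the support of $\pi$ consists exactly of the $\rho$-related pairs; the conjunct $\rho(x_1,x_2)$ is what distinguishes $\pi$ from the product relation and is indispensable (dropping it makes $s$ fail \ref{ax:esurj}). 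Let $s:\aro\to X^I$ be the diagonal and $d=\langle d_0,d_1\rangle:X^I\to\aro\times\aro$ the identity on $A\times A$, so that $ds$ is the diagonal of $\aro$. Then $d$ is a fibration: given $\rho(x_1,x_2)$ (the support of $\pi$ at $(x_1,x_2)$) together with $\rho(x_1,u_1)\wedge\rho(x_2,u_2)$, the point $(u_1,u_2)$ itself witnesses \ref{ax:fib}, since all three conjuncts of $\pi\bigl((x_1,x_2),(u_1,u_2)\bigr)$ are then available. And $s$ is a weak equivalence: \ref{ax:inj} holds because $\pi(s x,s y)$ unfolds to $\rho(x,y)\wedge\rho x$, while \ref{ax:esurj} holds by sending a support element $(w_1,w_2)$ to the witness $x:=w_1$, using that $\rho(w_1,w_2)$ entails $\rho w_1$. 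This exhausts \ref{ax:a}--\ref{ax:e}; once $\pi$ is chosen with the extra conjunct, every verification reduces to bookkeeping with \ref{ax:sym}, \ref{ax:trans} and \ref{ax:compat}.
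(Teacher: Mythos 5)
Your proposal is correct and follows essentially the same route as the paper's proof: the same path object $(A\times A,\tilde\rho)$ with the extra conjunct $\rho(x,y)$, the same reduction of trivial-fibration stability to \ref{ax:inj} and \ref{ax:surj} via Lemma~\ref{lem:tfib-eq-inj-surj}, and the same essential use of Lemma~\ref{lem:pullback-rule} to convert the pair $(a,z')$ with $fa=gz'$ into the existential witness ranging over $D$. The remaining verifications you defer to bookkeeping are exactly the ones the paper carries out explicitly in the internal language.
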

\begin{proof}
Using Lemma~\ref{lem:fct-props}\ref{lem:fct-props-iso} it is easy to see that
\ref{ax:fib}, \ref{ax:inj}, and \ref{ax:esurj} hold for isomorphisms, and are
stable under composition. Given a composable pair
$\aro\xrightarrow{f}\bsi\xrightarrow{g}\cta$, if \ref{ax:inj} holds for $gf$,
then it holds for $f$, and if \ref{ax:esurj} holds for $gf$, then it holds for
$g$; for the same reason that initial segments of injective functions are
injective and end segments of surjective functions are surjective. Furthermore,
it is easy to show that \ref{ax:esurj} for $gf$ and \ref{ax:inj} for $g$ implies
\ref{ax:esurj} for $f$, and that \ref{ax:inj} for $gf$ and \ref{ax:esurj} for
$f$ implies \ref{ax:inj} for $g$, again formalizing set theoretic arguments.
This shows conditions \ref{ax:a} and \ref{ax:b}.

For condition \ref{ax:c} consider a pullback square
\begin{equation}
\begin{tikzcd}
    (D,\rho\brprod_C\sigma)
        \ar[r,"k"']
        \ar[d,"h"']
&   \bsi
        \ar[d,"g"]
\\  \aro
        \ar[r,"f"]
&   \cta
\end{tikzcd}
\end{equation}
and assume that $g$ is a fibration. To show the validity of \ref{ax:fib} for $h$
(abbreviated \ref{ax:fib}$(h)$) we use internal language with the notation
introduced in the proof of Lemma~\ref{lem:tfib-eq-inj-surj}. In the step from
\ref{itm:ttf} to \ref{itm:ttg} we use the admissible rule from
Lemma~\ref{lem:pullback-rule}.
\begin{enumerate}
 \item \label{itm:ttb}
 \derline{\ref{ax:compat}}
 {$\rho (hp,x) \ent_{p,x} \tau (g(kp),fx)$}
 \item \label{itm:ttc}
 \derline{\ref{ax:fib}$(g)$}
 {$\sigma (kp),\tau (g(kp),fx) \ent_{p,x} \exists  v\qdot \sigma (kp,v)\wedge 
gv = fx$}
 \item \label{itm:ttd}
 \derline{\ref{itm:ttb}, \ref{itm:ttc}}
 {$\sigma (kp),\rho (hp,x) \ent_{p,x} \exists  v\qdot \sigma (kp,v)\wedge 
gv = fx$}
 \item \label{itm:tte}
 \derline{}
 {$\rho (hp,hq^*),\sigma (kp,kq^*) \ent_{p,q^*} \rho (hp,hq^*)\wedge \sigma (kp,kq^*)\wedge hq^*=hq^*$}
 \item \label{itm:ttf}
 \derline{\ref{itm:tte}}
 {$\rho (hp,hq^*),\sigma (kp,kq^*) 
\ent_{p,q^*} \exists q\qdot\rho (hp,hq)\wedge\sigma (kp,kq)\wedge hq=hq^*$}
 \item \label{itm:ttg}
 \derline{\ref{itm:ttf}}
 {$\rho (hp,x),\sigma (kp,v),gv = fx \ent_{p,x,v} \exists q\qdot\rho (hp,hq)\wedge\sigma (kp,kq)\wedge hq=x$}
 \item \label{itm:tth}
 \derline{\ref{itm:ttd}, \ref{itm:ttg}}
 {$\sigma (kp),\rho (hp,x) \ent_{p,x} \exists q\qdot\rho (hp,hq)\wedge \sigma (kp,kq),hq=x$}
 \item \label{itm:tti}
 \derline{\ref{itm:tth}}
 {\ref{ax:fib}$(h)$}
 \end{enumerate}
This shows that fibrations are stable under pullback. To show that  
\emph{trivial} fibrations are stable under pullback, we show pullback
stability of conditions \ref{ax:inj} and \ref{ax:surj} separately.

Pullback stability of \ref{ax:surj} is shown as follows.
\begin{enumerate}
 \item \label{itm:psa}
 \derline{\ref{ax:surj}$(g)$, \ref{ax:compat}$(f)$}
 {$\rho x\ent_x\exists u\qdot \sigma u\wedge gu=fx$}
 \item \label{itm:psb}
 \derline{}
 {$\rho(hp^*),\sigma(kp^*)\ent_{p^*}\rho(hp^*)\wedge\sigma(kp^*)\wedge hp^*=hp^*$}
 \item \label{itm:psc}
 \derline{\ref{itm:psb}}
 {$\rho(hp^*),\sigma(kp^*)\ent_{p^*}\exists p\qdot\rho(hp)\wedge\sigma(kp)\wedge hp=hp^*$}
 \item \label{itm:psd}
 \derline{\ref{itm:psc}}
 {$\rho x,\sigma u,gu=fx\ent_{x,u}\exists p\qdot\rho(hp)\wedge\sigma(kp)\wedge hp=x$}
 \item \label{itm:pse}
 \derline{\ref{itm:psa}, \ref{itm:psc}}
 {$\rho x\ent_x\exists p\qdot \rho(hp)\wedge\sigma(kp)\wedge hp=x$}
\end{enumerate}
Pullback stability of \ref{ax:inj} is shown as follows.
\begin{enumerate}
 \item \label{itm:gba}
 \derline{\ref{ax:inj}$(g)$}
{$\sigma(kp),\sigma(kq),\tau(gkp,gkq)\ent_{p,q} \sigma(kp,kq)$}
 \item \label{itm:gbb}
 \derline{\ref{ax:compat}$(f)$, $fh=gk$}
{$\rho(hp,hq)\ent_{p,q} \tau(gkp,gkq)$}
 \item \label{itm:gbc}
 \derline{\ref{itm:gba}, \ref{itm:gbb}}
{$\sigma(kp),\rho(hp,hq),\sigma(kq)\ent_{p,q} \sigma(kp,kq)$}
 \item \label{itm:gbd}
 \derline{\ref{itm:gbc}}
{$\rho(hp),\sigma(kp),\rho(hp,hq),\rho(hq),\sigma(hq)\ent_{p,q}\rho(hp,hq)\wedge\sigma(kp,kq) $}
\end{enumerate}
A path object for $\aro$ is given by 
\begin{equation}\label{eq:pathobj-fct}
\aro\xrightarrow{s}(A\times A,\tilde\rho)\xrightarrow{d}\aro\times\aro=
(A\times A,\rho\brprod\rho)
\end{equation}
with 
$
\tilde\rho(u,v)\equiv(\rho\brprod\rho)|_\rho
$ and where the underlying maps of $s$ and $d$ are $\delta$ and $\id$,
respectively. Then $d$ is a fibration by Lemma~\ref{lem:restrict-fib}, and it is
easy to see that $s$ is compatible with $\rho$ and $\tilde\rho$ and a weak
equivalence.

Finally, it is straightforward to check that terminal projections $\aro\to 1$ are 
fibrations.
\end{proof}

\begin{remark}\label{rem:degenerate}
The fibration part of all path object
factorizations~\eqref{eq:pathobj-fct} is monic, since the underlying map is iso
and the forgetful functor reflects monomorphisms. This implies that the
\emph{$\infty$-localization of $\canp$}\,---\,\ie the finitely complete
$\infty$-category obtained by weakly inverting weak equivalences, see
\eg~\cite{kapulkin2017quasicategories}\,---\,has the property that all of its
hom-spaces are discrete, or equivalently that all of its objects are
$0$-truncated. Indeed, if the second factor of a path object factorization $X\to
PX\to X\times X$ is monic, then
\begin{equation}
  \begin{tikzcd}[sep=small]PX\ar[r]\ar[d]&PX\ar[d]\\PX\ar[r]&X\times
  X\end{tikzcd}
\end{equation}
is a pullback of fibrations and therefore a homotopy pullback, which means that
$PX\to X\times X$ is a homotopy embedding, and therefore so is the diagonal
$X\to X\times X$.
\end{remark}

\section{The homotopy category}

Throughout this section let $\trip:\catcpos$ be a fixed  indexed frame. In the
following we show that the homotopy category of $\canp$ is the category
$\catc[\trip]$ of partial equivalence relations and \emph{functional relations}
in $\trip$. 
We start by recalling the definition from~\cite[Def.~3.1]{pitts2002}.
\begin{definition}
The category $\catc[\trip]$ has the same objects as $\canp$, and morphisms from
$\aro$ to $\bsi$ are predicates $\phi\in\trip(A\times B)$ satisfying the
judgments
\begin{description}
\item[\namedlabel{ax:strict}{(strict)}]
$\phi(x,u)\ent_{x,u}\rho x\wedge \sigma u$
\item[\namedlabel{ax:cong}{(cong)}]
$\rho(y,x),\phi(x,u),\sigma(u,v)\ent_{x,y,u,v}\phi(y,v)$
\item[\namedlabel{ax:singval}{(singval)}]
$\phi(x,u),\phi(x,v)\ent_{x,u,v}\sigma(u,v)$
\item[\namedlabel{ax:tot}{(tot)}]
$\rho x\ent_x\exists u\qdot \phi(x,u)$.
\end{description}
Composition of morphisms 
$\aro\xrightarrow{\phi}\bsi\xrightarrow{\gamma}\cta$ is given by 
\begin{equation}
(\gamma\circ\phi)(x,r)\;\equiv\;\exists u\qdot \phi(x,u)\wedge\gamma(u,r).
\end{equation}
The identity morphism on $\aro$ is given by the predicate $\rho$ itself.
\end{definition}
\begin{remarks}
\begin{anumerate}
\item As already mentioned, $\csqp$ is an exact category, and Maietti and
Rosolini have shown that it is in fact the `free' one on
$\trip$~\cite[Corollary~3.4]{maietti2012unifying}. 

\item Besides the presentation as homotopy category of $\canp$ (which depends on
the weak equivalences as additional data), the category $\csqp$ can also be
presented as category of functional relations in the sense
of~\cite[Section~3]{maietti2017triposes} in the indexed poset
$\widetilde{\trip}:\canp\op\to\catpos$ of descent predicates from
Remark~\ref{rem:desc}\ref{rem:desc:ipos}. This works because $\widetilde{\trip}$
is an indexed frame whenever $\trip$ is.

\item If $\trip$ is the canonical indexing of a frame $H$, then
$\catset[\trip]=\catset[\fam(H)]$ is the category $\sh(H)$ of sheaves on
$H$~\cite[Section~2.8]{pitts81}. 
\end{anumerate}
\end{remarks}
The following lemma
recalls constructions of finite limits and characterizations of monomorphisms
and covers in $\csqp$.
\begin{lemma}\label{lem:frel-iso}
\begin{inumerate}
\item Finite products in $\csqp$ are given by $1=(1,\top)$ and
$\aro\times\bsi=(A\!\times\! B,\rho\!\brprod\!\sigma)$ as in $\canp$. An
equalizer of $\phi,\gamma:\aro\to\bsi$ is given by 
\begin{equation}
(A,\rho|_\upsilon)\xrightarrow{\rho|_\upsilon}\aro\qtext{where}
\upsilon(a)\equiv \ex b\qdot \phi(a,b)\wedge\gamma(a,b).
\end{equation}
\item\label{lem:frel-iso-iso}
A map $\phi:\aro\to\bsi$ in $\catc[\trip]$ is a monomorphism \iff 
the judgment
\begin{description}
\item[\namedlabel{ax:inj'}{(inj*)}]
$\phi(x,u),\phi(y,u)\ent_{x,y,u}\rho(x,y)$
\end{description}
holds in $\trip$. It is a \emph{cover}\footnote{Covers are maps that do not
factor through any proper subobject of their codomain. In regular categories
they coincide with regular epimorphisms. See~\cite[A1.3]{elephant1}.} \iff 
\begin{description}
\item[\namedlabel{ax:esurj'}{(esurj*)}]
$\sigma u\ent_u\exists x\qdot \phi(x,u)$
\end{description}
holds in $\trip$. In particular, $\phi$ is an isomorphism \iff both \ref{ax:inj'} 
and \ref{ax:esurj'} hold.
\end{inumerate}
\end{lemma}
\begin{proof}
These statements can be found in \cite[Sections 2.4, 2.5]{pitts81}  and
\cite[Section~2.2]{vanoosten2008realizability} for triposes. The latter
reference explicitly uses the word `cover' whereas the former speaks about
general {epimorphisms}, which are known to coincide with regular
epimorphisms\,---\,\ie covers\,---\,in toposes. 
In \cite[Section~2.2.1]{frey2013fibrational} the same statements can be found
for `existential fibrations', which correspond to indexed frames under the
equivalence of indexed posets and posetal fibrations.
In all cases the statements are given without proofs, which are considered
straightforward.
\end{proof}

\begin{definition}\label{def:functor-e} The functor 
\begin{equation}
E:\canp\to\catc[\trip]
\end{equation} 
is the identity on objects, and is defined by  
\begin{equation}
 E(f)(x,u)\;\equiv\; \rho(x)\wedge\sigma(f x,u)
\end{equation}
for morphisms $f:\aro\to\bsi$.
\end{definition}

\begin{theorem}\label{thm:canp-hocat-catcp}
\begin{inumerate}
\item\label{thm:canp-hocat-catcp-charwe} A morphism $f:\aro\to\bsi$ in $\canp$ is a weak equivalence if and only if
$E(f)$ is an isomorphism in $\catc[\trip]$.
\item\label{thm:canp-hocat-catcp-loc} For any category $\catd$ and any
functor $F:\canp\to\catd$ sending weak equivalences to 
isomorphisms there exists a unique $\widetilde{F}:\catc[\trip]\to\catd$
satisfying $\widetilde{F}\circ E=F$.
\end{inumerate}
\end{theorem}
\begin{proof}
The first claim follows from Lemma~\ref{lem:frel-iso} and the facts that
\ref{ax:inj} holds for $f$ if and only if \ref{ax:inj'} holds for $ E(f)$, and 
that \ref{ax:esurj} holds for $f$ if and only if \ref{ax:esurj'} holds for 
$ E(f)$, as is easily verified.

For the second claim assume that $F:\canp\to\catd$ inverts weak equivalences.
Since $E$ is identity-on-objects, we only have to define $\widetilde{F}$ on
morphisms. Let $\phi:\aro\to\bsi$ in $\catcp$. We construct the span
\begin{equation}\label{eq:faq1}
\aro\xleftarrow{\phi_l}
(A\times B,(\rho\brprod\sigma)|_\phi)\xrightarrow{{\phi_r}}\bsi
\end{equation}
in $\canp$, where the underlying functions of $\phi_l$ and $\phi_r$ are the
projections, and $(\rho\brprod\sigma)|_\phi$ is as in
Definition~\ref{def:restrictper}. Then one easily verifies that $\phi_l$ is a
trivial fibration, and that 
\begin{equation}
  \phi\circ E(\phi_l) = E(\phi_r) 
\end{equation}
in $\catcp$.
For any $\widetilde{F}$ satisfying $\widetilde{F}\circ {E}=F$ we therefore must have 
\begin{equation}
  \widetilde{F}(\phi)\circ F(\phi_l) = F(\phi_r)
\end{equation}
and since $F$ is assumed to invert weak equivalences we can deduce
\begin{equation}\label{eq:functor-tilde-f}
  \widetilde{F}(\phi) = F(\phi_r)\circ F(\phi_l)\inv.
\end{equation}
It remains to show that~\eqref{eq:functor-tilde-f} defines a functor satisfying
$\widetilde F\circ E = F$.

To see that the construction commutes with composition, let 
\begin{equation}\aro\xrightarrow{\phi}\bsi\xrightarrow{\gamma}\cta\end{equation}
in $\catcp$, and define $\xi\in\trip(A\times B\times C)$ 
and $\theta\in\trip(A\times C)$ by 
\begin{align}
\xi(x,u,r) \;&\equiv\; \phi(x,u)\wedge\gamma(u,r)\quad\text{and}\\
\theta(x,r)\;&\equiv\;\exists u\qdot \xi(x,u,r),
\end{align}
in other words $\theta=\gamma\circ\phi$. Consider the following diagram.
\begin{equation}
\begin{tikzcd}[sep = small]
&   (A\!\times\! C,(\rho\!\brprod\!\tau)|_\theta)
        \ar[dl,"{\theta_l}"',"{\sim}"]
        \ar[dr,"\theta_r"]
\\  \aro 
&&  \cta 
\\& (A\smtimes B\smtimes C,(\rho\!\brprod\!\sigma\!\brprod\!\tau)|_\xi)
        \ar[uu,"\partial_1","\sim"']
        \ar[ld,"\partial_2","\sim"']
        \ar[rd,"\partial_0"']
\\  (A\smtimes B,(\rho\!\brprod\!\sigma)|_\phi)
        \ar[uu,"\phi_l","\sim"']
        \ar[rd,"\phi_r"']
&&  (B\!\times\! C,(\sigma\!\brprod\!\tau)|_\gamma)
        \ar[uu,"\gamma_r"']
        \ar[ld,"\gamma_l","\sim"']
\\& \bsi
\end{tikzcd}
\end{equation}
The three squares commute since the underlying maps are simply projections,
$\phi_l$, $\gamma_l$, and $\theta_l$ are trivial fibrations as remarked earlier,
and moreover the upper left square is a pullback, whence $\partial_1$ and
$\partial_2$ are trivial fibrations, in particular weak equivalences, as
well\footnote{Alternatively one can verify by hand that $\partial_1$ and
$\partial_2$ are weak equivalences and skip the pullback argument, to obtain a
proof that does not depend on the fibrations in the
category-of-fibrant-objects-structure on $\canp$ (\cf discussion in the
introduction).}. Applying $F$ we can argue
\begin{align}
\widetilde F(\gamma)\circ\widetilde F(\phi)
=\;&F(\gamma_r)\circ F(\gamma_l)\inv\circ F(\phi_r)\circ F(\phi_l)\inv\\
=\;&F(\gamma_r)\circ F(\partial_0)\circ F(\partial_2)\inv\circ F(\phi_l)\inv\\
=\;&F(\theta_r)\circ F(\partial_1)\circ F(\partial_2)\inv\circ F(\phi_l)\inv\\
=\;&F(\theta_r)\circ F(\theta_l)\inv = \widetilde F(\theta) = \widetilde F(\gamma\circ\phi).
\end{align}
Preservation of identities is straightforward and thus $\tilde F$ is
functorial.

To see that $\widetilde F\circ E = F$, let $f:\aro\to\bsi$ in $\canp$ and
consider the diagram
\begin{equation}\label{eq:cofo-fac}
\begin{tikzcd}
    (A\smtimes B,(\rho\brprod\sigma)|_{ E(f)}) 
        \ar[d, bend right, "E(f)_l"']
        \ar[dr,"E(f)_r"]
\\  \aro
        \ar[r,"f" near start]
        \ar[u,bend right,"s"]
&   \bsi
\end{tikzcd}
\end{equation}
in $\canp$, where $s$ has underlying map $\langle\id_A,f\rangle$. Then $ E(f)_r\circ s =f$
and $s$ is a section of the weak equivalence $ E(f)_l$, which means that $F(s)$
is an inverse of $F( E(f)_l)$, and we can argue
\begin{equation}
\widetilde F(E(f)) = F( E(f)_r)\circ F( E(f)_l)\inv= F( E(f)_r)\circ F(s)
= F(f)
\end{equation}
as required.
\end{proof}
\begin{remark}\label{rem:loc-thm}
\begin{anumerate}
\item\label{rem:loc-thm-hocat} 
Theorem~\ref{thm:canp-hocat-catcp}\ref{thm:canp-hocat-catcp-loc} says that
the functor $E:\canp\to\catcp$ exhibits $\catcp$ as the \emph{localization}
$\canp[\cweq\inv]$ of $\canp$ by the class $\cweq$ of weak equivalences.
See~\cite[Section~I.1]{gabriel1967calculus},
\cite[I.2.2(ii)]{dwyer2005homotopy}.
Following~\cite[I.2.3(iv)]{dwyer2005homotopy} we also use the term
\emph{homotopy category} instead of `localization'.
\item
The factorization of $f$ displayed in~\eqref{eq:cofo-fac} is an instance of the
standard \emph{mapping cocylinder factorization}, which factors morphisms
$h:X\to Y$ in a category of fibrant objects into a section $s$ of a trivial
fibration followed by a fibration $d_1\circ h'$, as in the following diagram
where the square is a pullback~\cite[Factorization Lemma
pg.~421]{brown1973abstract}.
\begin{equation}\begin{tikzcd}
    \cocyl(h)
        \ar[r,"h'"']
        \ar[d,"", ]
&	P Y
        \ar[d,"d_0"]
        \ar[r,"d_1"']
&   Y
\\	X
        \ar[r,"h"]
        \ar[u,bend left,"s"]
&	Y
\end{tikzcd}\end{equation}
In particular we have 
$(A\smtimes B,(\rho\brprod\sigma)|_{ E(f)})=\aro\times_{\bsi}P\bsi$.
\end{anumerate}\end{remark}
The following lemma characterizes the \emph{kernel} of the localization functor
$E$.
\begin{lemma}\label{lem:homotopy}
For parallel arrows $f,g:\aro\to\bsi$ in $\canp$, the
following are equivalent:
\begin{inumerate}
\item \label{lem:homotopy-efeg}
$E(f)=E(g)$
\item \label{lem:homotopy-judg}
The judgment $\rho x\ent_x \sigma(fx,gx)$ holds.
\item \label{lem:homotopy-pobj}
 $\langle f,g\rangle$ factors through the path
object from~\eqref{eq:pathobj-fct}.   
\begin{equation}
  \begin{tikzcd}[column sep = large]
    & (B\smtimes B,\tilde{\sigma})
    \ar[d,"d"]\\
  \aro
  \ar[r,"{\langle f,g\rangle}"]
  \ar[ru,dashed, shift left = 2] 
  & \bsi\times\bsi
  \end{tikzcd}
\end{equation}
\end{inumerate}
\end{lemma}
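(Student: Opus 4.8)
The plan is to single out condition~\ref{lem:homotopy-judg}, the judgment $\rho x\ent_x\sigma(fx,gx)$, as the central hub and prove the two equivalences \ref{lem:homotopy-judg}$\Leftrightarrow$\ref{lem:homotopy-efeg} and \ref{lem:homotopy-judg}$\Leftrightarrow$\ref{lem:homotopy-pobj} independently, since the middle condition is by far the most elementary and the other two unwind directly to it. Throughout I would argue inside the internal language, using only \ref{ax:sym}, \ref{ax:trans}, and \ref{ax:compat} for $f$ and $g$, together with the Substitution Lemma~\ref{lem:subs}.

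For \ref{lem:homotopy-judg}$\Leftrightarrow$\ref{lem:homotopy-pobj} I would simply unfold what the factorization asserts. The fibration $d$ of the path object $(B\smtimes B,\tilde\sigma)$ has underlying map $\id_{B\smtimes B}$, and since the forgetful functor is faithful any lift must have underlying function $\langle f,g\rangle$; hence the factorization exists precisely when $\langle f,g\rangle$ defines a morphism $\aro\to(B\smtimes B,\tilde\sigma)$, i.e.\ when
\[
\rho(x,x')\ent_{x,x'}\sigma(fx,fx')\wedge\sigma(gx,gx')\wedge\sigma(fx,gx).
\]
The first two conjuncts are exactly \ref{ax:compat} for $f$ and for $g$, so this reduces to $\rho(x,x')\ent_{x,x'}\sigma(fx,gx)$. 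Deriving $\rho(x,x')\ent\rho x$ from \ref{ax:sym} and \ref{ax:trans} gives one direction, and substituting $x'\mapsto x$ via Lemma~\ref{lem:subs} gives the other, so this is equivalent to \ref{lem:homotopy-judg}.

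For \ref{lem:homotopy-judg}$\Rightarrow$\ref{lem:homotopy-efeg}, I recall $E(f)(x,u)\equiv\rho x\wedge\sigma(fx,u)$ and its analogue for $g$; equality of these predicates amounts to the entailment $\rho x,\sigma(fx,u)\ent_{x,u}\sigma(gx,u)$ together with its mirror image. Assuming \ref{lem:homotopy-judg} and applying \ref{ax:sym} to get $\rho x\ent\sigma(gx,fx)$, the entailment follows by a single use of \ref{ax:trans} chaining $\sigma(gx,fx)$ with $\sigma(fx,u)$; the reverse direction is symmetric. The step I expect to demand the most care is the converse \ref{lem:homotopy-efeg}$\Rightarrow$\ref{lem:homotopy-judg}: here I would instantiate $\rho x,\sigma(fx,u)\ent_{x,u}\sigma(gx,u)$ at $u:=fx$ using Lemma~\ref{lem:subs}, obtaining $\rho x,\sigma(fx,fx)\ent_x\sigma(gx,fx)$, and then discharge the hypothesis $\sigma(fx,fx)=\sigma(fx)$ via \ref{ax:compat} for $f$ (which yields $\rho x\ent_x\sigma(fx)$), closing with \ref{ax:sym} to get $\rho x\ent_x\sigma(fx,gx)$. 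This substitute-and-discharge-the-support manoeuvre is the one genuinely non-formal move; everything else is routine bookkeeping in regular logic.
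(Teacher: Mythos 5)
Your proof is correct and complete; the paper's own proof of this lemma is literally the single word ``Easy,'' and your argument supplies exactly the routine internal-language verification (reduction of all three conditions to the judgment $\rho x\ent_x\sigma(fx,gx)$ via \ref{ax:sym}, \ref{ax:trans}, \ref{ax:compat} and the Substitution Lemma) that the author intends the reader to carry out. In particular the two delicate points --- that a lift through $d$ is forced to have underlying map $\langle f,g\rangle$ because $d$'s underlying map is the identity, and the substitute-$u:=fx$-then-discharge-$\sigma(fx,fx)$ step in \ref{lem:homotopy-efeg}$\Rightarrow$\ref{lem:homotopy-judg} --- are handled correctly.
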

\begin{proof}
  Easy.
\end{proof}
\begin{remarks}
\begin{anumerate}
\item
  In general \cofos, the equivalence between conditions \ref{lem:homotopy-efeg}
  and \ref{lem:homotopy-pobj} of Lemma~\ref{lem:homotopy} is replaced by the
  more complicated statement that parallel arrows $f,g:A\to B$ are identified in
  the homotopy category whenever there exists an equivalence $e:A'\to A$ such that
  $\langle f,g\rangle\circ e$ factors through a path object
  (see~\cite[Theorem~1-(ii)]{brown1973abstract}).
  \item  The construction of the homotopy category of a \cofo $\mcc$ given
  in~\cite{brown1973abstract} proceeds in two steps: first one defines a
  category $\pi(\mcc)$ by quotienting the morphisms of $\mcc$ by the relation
  described in item 1, and then $\ho(\mcc)$ is obtained by localizing
  $\pi(\mcc)$ by a calculus of fractions. This two-step construction gives rise
  to a factorization 
  \begin{equation}
    \mcc\to\pi(\mcc)\to\ho(\mcc)
  \end{equation}
  of the localization functor into a full functor followed by a faithful
  functor (both identity-on-objects).
  
  When applying this factorization to the functor $\canp\to\catcp$ in the case
  where $\trip$ is a tripos\,---\,\ie we quotient $\canp$ by the congruence
  relation analyzed in Lemma~\ref{lem:homotopy}\,---\,we recover in the middle
  the \emph{q-topos} $\mathbf{Q}(\trip)$ described
  in~\cite[Definition~5.1]{frey2015triposes}.
  \begin{equation}
    \canp\to\mathbf{Q}(\trip)\to\catcp
  \end{equation}
  In particular if $\trip$ is the canonical indexing of a frame $A$ then the
  middle category is equivalent to the quasitopos of separated presheaves.
  \begin{empheq}{equation}
    \catset\langle\fam(A)\rangle\to\mathbf{Sep}(A)\to\sh(A)
  \end{empheq}
\end{anumerate}
\end{remarks}

\section{Cofibrant objects and \texorpdfstring{$\ex$}{∃}-completions}
\label{sec:eprime}

\begin{definition} Let $\catc$ be a finite-limit category.
\begin{anumerate}
\item Let $\fifd:\catcpos$ be an indexed poset, let $I\in\catc$, and let 
$\varphi_1,\dots,\varphi_n\in\fifd(I)$ ($n\geq 0$) be a list of predicates.

We define $\subslice{\fifd}{I}{(\varphi_1,\dots,\varphi_n)}$ to be the category
whose objects are pairs $(f:J\to I,\psi\in\fifd(J))$ satisfying $\psi\leq
f^*\varphi_i$ for $1\leq i\leq n$, and whose morphisms from $(f: J\to I,\psi)$
to $(g:K\to I,\theta)$ are arrows $h:J\to K$ such that $g\circ h=f$ and
$\psi\leq h^*\theta$.
\item   
An indexed poset $\fifd:\catcpos$ is said to satisfy the \emph{solution set
condition for finite meets (\ssc)}, if for all $I\in\catc$ and
$\varphi_1,\dots,\varphi_n\in\fifd(I)$ the category
$\subslice{\fifd}{I}{(\varphi_1,\dots,\varphi_n)}$ has a weakly terminal object.
\item An indexed monotone map $\Phi:\fifd\to\trip:\catcpos$ from an indexed
poset $\fifd$ satisfying the \ssc to an indexed frame $\trip$ is called
\emph{flat}, if 
\begin{equation}\ex_f\Phi_J\psi\geq\Phi_I\varphi_1\wedge\dots\wedge\Phi_I\varphi_n\end{equation} whenever
$(f:J\to I,\psi)$ is weakly terminal in $\subslice{\fifd}{I}{(\varphi_1,\dots,\varphi_n)}$.
(The converse inequality always holds.)

\item A flat map $\Phi:\fifd\to\trip$ is called an \emph{\ecompletion of
$\fifd$} if precomposition induces an order isomorphism 
\begin{equation}
\mathbf{IFrm}(\trip,\triq)\xto{\cong}\mathbf{Flat}(\fifd,\triq)
\end{equation}
between morphisms of indexed frames $\trip\to\triq$ and flat maps
$\fifd\to\triq$.
\item A predicate $\varpi\in\trip(I)$ of an indexed frame $\trip$ is
  called \emph{\eprime}, if for every composable pair $I
  \xleftarrow{u}J\xleftarrow{v}K$ of maps and every $\psi\in\trip(K)$ satisfying
  $u^*\varpi\leq\exists_v\psi$, there exists a section $s$ of $v$ such that
  $u^*\varpi\leq s^*\psi$.
\item 
  We say that an indexed frame $\trip$ has \emph{enough} \eprime predicates, if
  for every predicate $\varphi\in\trip(I)$ there is an \eprime predicate
  $\varpi\in\trip(J)$ and a map $f : J\to I$ such that $\varphi = \ex_f\varpi$.
\end{anumerate}
\end{definition}
It is clear that \ecompletions of a given $\fifd$ are unique up to isomorphism
whenever they  exist, and that \eprime predicates are stable under reindexing in
any  indexed frame. The term `flat' is justified by the following lemma.
\begin{lemma} 
\begin{inumerate}
\item  Every \imsl satisfies the \ssc. 
\item  An indexed monotone map $\Phi:\fifd\to\trip:\catcpos$ from an indexed
meet-semilattice to an indexed frame is flat if and only if  it is cartesian.
\end{inumerate}\end{lemma}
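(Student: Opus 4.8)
The plan is to produce, for each choice of $I$ and $\varphi_1,\dots,\varphi_n\in\fifd(I)$, an explicit weakly terminal object of $\subslice{\fifd}{I}{(\varphi_1,\dots,\varphi_n)}$ and then read both claims off that object. For the first claim I would take the pair $(\id_I,\varphi_1\wedge\dots\wedge\varphi_n)$, which makes sense because the fibers of an \imsl are \msls (for $n=0$ the empty meet is the top element). It is an object of the slice since $\varphi_1\wedge\dots\wedge\varphi_n\leq\varphi_i=\id_I^*\varphi_i$ for every $i$. Given any object $(f:J\to I,\psi)$, the triangle condition $\id_I\circ h=f$ forces the underlying map to be $h=f$, and this $f$ is a legitimate morphism exactly because reindexing preserves finite meets: from $\psi\leq f^*\varphi_i$ for all $i$ one gets $\psi\leq\bigwedge_i f^*\varphi_i=f^*(\varphi_1\wedge\dots\wedge\varphi_n)$. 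Thus $(\id_I,\varphi_1\wedge\dots\wedge\varphi_n)$ is even terminal, in particular weakly terminal, proving the \ssc.

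For the second claim I would exploit this same canonical object in both directions. If $\Phi$ is flat, I instantiate the flatness inequality at the weakly terminal object $(\id_I,\varphi_1\wedge\dots\wedge\varphi_n)$; since $\ex_{\id_I}=\id$ it becomes $\Phi_I(\varphi_1\wedge\dots\wedge\varphi_n)\geq\Phi_I\varphi_1\wedge\dots\wedge\Phi_I\varphi_n$, while the reverse inequality is immediate from monotonicity of $\Phi$, so $\Phi$ preserves finite meets and is cartesian. Conversely, suppose $\Phi$ is cartesian and let $(f:J\to I,\psi)$ be an \emph{arbitrary} weakly terminal object. Weak terminality supplies a morphism from the canonical object into it, i.e.\ a map $h:I\to J$ with $f\circ h=\id_I$ and $\varphi_1\wedge\dots\wedge\varphi_n\leq h^*\psi$. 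Applying $\Phi_I$ and using cartesianness and naturality of $\Phi$ gives $\Phi_I\varphi_1\wedge\dots\wedge\Phi_I\varphi_n=\Phi_I(\varphi_1\wedge\dots\wedge\varphi_n)\leq\Phi_I(h^*\psi)=h^*\Phi_J\psi$. Finally the unit $\Phi_J\psi\leq f^*\ex_f\Phi_J\psi$ of the adjunction $\ex_f\dashv f^*$, reindexed along $h$ and simplified using $f\circ h=\id_I$ (so that $h^*f^*=\id_I^*$), yields $h^*\Phi_J\psi\leq\ex_f\Phi_J\psi$; chaining the two inequalities gives the flatness condition $\Phi_I\varphi_1\wedge\dots\wedge\Phi_I\varphi_n\leq\ex_f\Phi_J\psi$.

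The one step I would watch carefully is the cartesian-implies-flat direction, because there the weakly terminal object $(f,\psi)$ is not the convenient $(\id_I,\varphi_1\wedge\dots\wedge\varphi_n)$ and I must transport the meet inequality across the mediating map $h$. The argument rests on two standard facts — naturality of $\Phi$ (so that $\Phi_I\circ h^*=h^*\circ\Phi_J$) and the unit of $\ex_f\dashv f^*$ combined with $f\circ h=\id_I$ — and once these are in place the chain of inequalities closes. Everything else is formal manipulation of the defining (in)equalities of \msls, reindexing, and adjunctions, and I expect it to be routine.
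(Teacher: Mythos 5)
Your proof is correct and takes essentially the same route as the paper: both exhibit $(\id_I,\varphi_1\wedge\dots\wedge\varphi_n)$ as a genuinely terminal object of the slice, read off the flat-implies-cartesian direction by instantiating flatness there, and handle the converse via the mediating map $h$ (the paper's section $s$ of the terminal projection) supplied by weak terminality. The only cosmetic difference is that you close the final inequality with the unit of $\ex_f\dashv f^*$ together with $(f\circ h)^*=\id$, where the paper uses the counit of $\ex_h\dashv h^*$ together with $\ex_f\circ\ex_h=\id$; these are mates of the same argument.
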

\begin{proof}
If $\fifd$ is an indexed semilattice then $(\id_I,\varphi_1\wots\varphi_n)$ is
terminal in $\subslice{\fifd}{I}{(\varphi_1,\dots,\varphi_n)}$, and with this it
is immediate that flat maps are cartesian. 

Conversely, assume that $\Phi$ is cartesian and that $(f:J\to I,\psi)$ is weakly
terminal in $\subslice{\fifd}{I}{(\varphi_1,\dots,\varphi_n)}$. Then the
terminal projection 
\begin{equation}
  (f,\psi)\to(\id_I,\varphi_1\wots\varphi_n)
\end{equation}
has a section $s$, and we can argue
\begin{empheq}{align}
\Phi_I\,\varphi_1\wots\Phi_I\,\varphi_n
\,&\leq\, s^*\Phi_J\,\psi
&&\text{since }\varphi_1\wots\varphi_n\leq
s^*\,\psi\\
\,&=\,\ex_f\sex_s\,s^*\Phi_J\,\psi
&&\text{since } f\circ s = \id_I\\
\,&\leq\,\ex_f\,\Phi_J\,\psi
 && \text{since }\ex_s \adj s^*.
\end{empheq}
\end{proof}
\begin{theorem}\label{thm:flat-ecompletion}
\begin{inumerate}
\item \label{thm:flat-ecompletion-then-ecompletion}
Let $\trip$ be an indexed frame, and $\fifd\subs\trip$ an indexed subposet such 
that
\begin{itemize}
  \item the predicates in $\fifd$ are \eprime in $\trip$, and 
  \item for every $\varphi\in\trip(I)$ there are $f:J\to I$ and
$\varpi\in\fifd(J)$ with $\ex_f\,\varpi=\varphi$.
\end{itemize}
  Then $\fifd$ satisfies the \ssc and the inclusion
$\fifd\to\trip$ is an \ecompletion.

In particular, the inclusion of the indexed subposet of \eprime predicates
is an \ecompletion whenever $\trip$ has enough \eprime predicates.

\item \label{thm:flat-ecompletion-ex}
Every indexed poset $\fifd$ satisfying the \ssc has an \ecompletion.
\item \label{thm:flat-ecompletion-if-ecompletion} If
$\Phi:\fifd\to\trip:\catcpos$ is an \ecompletion, then $\Phi$ is fiberwise
order-reflecting and its image consists precisely of the \eprime predicates in
$\trip$.
\end{inumerate}
\end{theorem}
\begin{proof}

\emph{Ad \ref{thm:flat-ecompletion-then-ecompletion}.} By assumption, given $\varpi_1\cots\varpi_n\in\fifd(I)$ there
 exist $g:K\to I$ and $\upsilon\in\fifd(K)$ such that
 $\ex_g\upsilon=\varpi_1\wots\varpi_n$, and we claim that
 $(g, \upsilon)$ is weakly terminal in $\subslice{\fifd}{I}{(\varpi_1,\dots,\varpi_n)}$. To see this let $f:J\to I$ and $\pi\in\fifd(J)$ with $\pi\leq
 f^*(\varpi_i)$ for $1\leq i\leq n$.
 Let $J\xleftarrow{\tilde g}L\xrightarrow{\tilde f}K$ be a pullback of $J\xrightarrow{j}I\xleftarrow{g}K$. Then we can argue
 \begin{equation}
 \pi\leq f^*(\varpi_1\wedge\dots\wedge\varpi_n)=
 f^*\,\ex_g\,\upsilon 
= \ex_{\tilde g}\,\tilde f^*\,\upsilon
 \end{equation}
and since $\pi$ is \eprime, $\tilde g$ has a section $s$ such that $\pi\leq s^*\,\tilde f^*\,\upsilon$. It follows that $\tilde f\circ s$ constitutes the required morphism from  $(f,\pi)$ to $(g,\upsilon)$.

To see that $\fifd\to\trip$ is
an \ecompletion consider a flat map $\Phi:\fifd\to\triq$ and define
$\Psi:\trip\to\triq$ by 
\begin{equation}\label{eq:Psi-def}
  \Psi_I\,\varphi=\ex_f\,\Phi_J\,\pi
\end{equation}
 where
$\pi\in\fifd(J)$ with $\ex_f\pi=\varphi$. To show that $\Psi$ is fiberwise
monotone\,---\,and in particular well-defined\,---\,let $\varphi\leq\psi\in\trip(I)$ and let $\pi\in\fifd(J)$ and
$\upsilon\in\fifd(K)$ such that 
$\ex_f\pi=\varphi$ and $\ex_g\upsilon=\psi$.
Again let $J\xleftarrow{\tilde g}L\xrightarrow{\tilde f}K$ be a pullback of the span
$J\xrightarrow{f}I\xleftarrow{g}K$. 
We have 
\begin{equation}
\ex_f\,\pi\leq\ex_g\,\upsilon\qtext{which implies}\pi\leq f^*\,\ex_g\,\upsilon
=\ex_{\tilde g}\,\tilde f^*\,\upsilon,
\end{equation}
and since $\pi$ is \eprime, $\tilde g$ has a section $s$ with $\pi\leq
s^*\,\tilde f^*\upsilon$, whence we can deduce 
\begin{equation}
  \ex_f\,\Phi_J\,\pi
=  \ex_{g\,\tilde f\,s}\,\Phi_J\,\pi
\leq  \ex_{g}\,\ex_{\tilde f\,s}\,\Phi_J\,(\tilde f\,s)^*\upsilon
\leq \ex_{g}\,\Phi_K\,\upsilon.
\end{equation}
Naturality of $\Psi$ follows from~\ref{ax:bc} and preservation of $\ex$ is
straightforward. 

To see that $\Psi$ preserves binary meets, let $\varphi,\psi\in\trip(I)$ and
consider maps $f:J\to I$, $g:K\to I$ and predicates $\pi\in\fifd(J)$,
$\upsilon\in\fifd(K)$ such that 
\begin{equation}\label{eq:varpi-upsilon-sth}
  \ex_f\pi=\varphi\qtext{and} \ex_g\upsilon=\psi.
\end{equation}
Let $J\xleftarrow{\tilde g}L\xrightarrow{\tilde f}K$ be a pullback 
of the span $J\xrightarrow{f}I\xleftarrow{g}K$, and let $h:M\to L$ and
$\mu\in\fifd(M)$ such that 
\begin{equation}\label{eq:mu-sth}
  \ex_h\mu=\tilde g^*\pi\wedge\tilde f^*\upsilon.
\end{equation} 
Then $(h,\mu)$
is weakly terminal in $\subslice{\fifd}{L}{(\tilde g^*\pi,\tilde
f^*\upsilon)}$ and 
since $\Phi$ is flat we have 
\begin{equation}\label{eq:since-phi-flat}
\ex_{h}\,\Phi_M\,\mu=\tilde g^*\Phi_J\,\pi\wedge\tilde f^*\,\Phi_K\,\upsilon.
\end{equation}
Moreover, we have 
\begin{equation}\label{eq:mu-covering}
  \begin{aligned}
    \ex_f\sex_{\tilde g}\sex_h\,\mu
    & =  \ex_f\sex_{\tilde g}(\tilde g^*\,\pi\wedge\tilde f^*\,\upsilon)
    &&\text{by \eqref{eq:mu-sth}}\\
    & =  \ex_f(\pi\wedge\ex_{\tilde g}\,\tilde f^*\upsilon)
    &&\text{by \ref{ax:fr}}\\
    & =  \ex_f(\pi\wedge f^*\sex_{g}\,\upsilon)
    &&\text{by \ref{ax:bc}}\\
    & =  \ex_f\,\pi\wedge \ex_{g}\,\upsilon
    &&\text{by \ref{ax:fr}}\\
    & = \varphi\wedge\psi&&\text{by \eqref{eq:varpi-upsilon-sth}},
  \end{aligned}
  \end{equation}
and thus we can argue
\begin{align}
\Psi_I(\varphi\wedge\psi)
&=\Psi_I\,\ex_f\sex_{\tilde g}\sex_h\,\mu
&&\text{by \eqref{eq:mu-covering}}\\
&=\ex_f\sex_{\tilde g}\sex_h\,\Psi_M\,\mu
&&\text{since $\Psi$ commutes with $\ex$}\\
&=\ex_f\sex_{\tilde g}(\tilde g^*\Phi_J\,\pi\wedge\tilde f^*\,\Phi_K\,\upsilon)
&&\text{by \eqref{eq:since-phi-flat} and since $\Phi=\Psi$ on $\fifd$}\\
&=\ex_f(\Phi_J\,\pi\wedge\ex_{\tilde g}\,\tilde f^*\,\Phi_K\,\upsilon)
&&\text{by \ref{ax:fr}}\\
&=\ex_f(\Phi_J\,\pi\wedge f^*\,\ex_{ g}\,\Phi_K\,\upsilon)
&&\text{by \ref{ax:bc}}\\
&=\ex_f\,\Phi_J\,\pi\wedge \ex_{ g}\,\Phi_K\,\upsilon&&\text{by \ref{ax:fr}}\\
&=\Psi_I\,\varphi\wedge\Psi_I\,\psi&&\text{by \eqref{eq:Psi-def}}.
\end{align}
The fact that $\Psi$ preserves $\top$ is shown along the same lines.

\medskip

\emph{Ad \ref{thm:flat-ecompletion-ex}.} Given $\fifd:\catcpos$ satisfying the \ssc, define
$\trip:\catcpos$ to be the indexed poset whose predicates on $I$ are 
equivalence classes of pairs $(f:J\to I, \varphi\in\fifd(J))$, ordered by 
setting
$  (J\xto{f} I,\varphi)\leq(K\xto{g} I,\psi)$ iff there exists an 
  $h:J\to K$ with  $g h=f$ and $\varphi\leq h^*\,\psi$,
and quotiented by the symmetric part of this relation. Reindexing is defined by
$g^*[(f,\varphi)]=[(\tilde f,\tilde g^*\varphi)]$ where $\cdot\xleftarrow{\tilde
f}\cdot\xrightarrow{\tilde g}\cdot$ is a pullback of
$\cdot\xrightarrow{g}\cdot\xleftarrow{f}\cdot$, and existential quantification
is given by $\ex_g[(f,\varphi)]=[(g\circ f,\varphi)]$. A binary meet of
$[(f:J\to I,\varphi)]$ and $[(g:K\to I,\psi)]$ is given by $[(f\circ\tilde
g\circ h,\theta)]$ where $(h:L\to J\times_I K,\theta)$ is weakly terminal in
$\subslice{\fifd}{J\times_I K}{(\tilde g^*\varphi,\tilde f^*\psi)}$, and the
greatest element of $\trip(I)$ consists of the weakly terminal objects of
$\subslice{\fifd}{I}{()}$. The verifications of \ref{ax:fr} and \ref{ax:bc} are
straightforward. Thus $\trip$ is an indexed frame, and it is easy to see that the
assignment $\varphi\mapsto(\id,\varphi)$ defines an order-reflecting indexed
monotone map $\fifd\to\trip$ satisfying the conditions of
\ref{thm:flat-ecompletion-then-ecompletion}.

\medskip

\emph{Ad~\ref{thm:flat-ecompletion-if-ecompletion}.} From the construction in
the proof of \ref{thm:flat-ecompletion-ex} we know that up to isomorphism every
\ecompletion $\fifd\to\trip$ is of the form described in
\ref{thm:flat-ecompletion-then-ecompletion}, in particular it is
order-reflecting and its image consists of \eprime predicates. Moreover, for all
$\varphi\in\trip(I)$ there exist $f:J\to I$ and $\varpi\in\fifd(J)$ such that 
\begin{equation}\label{eq:swow}
\ex_f\varpi=\varphi,
\end{equation}
and if $\varphi$ is \eprime then $f$ has a section with
$\varphi\leq s^*\varpi$ (by $\ex$-primality) and
$s^*\varpi\leq\varphi$ (follows from~\eqref{eq:swow}) which shows that 
all \eprime predicates are contained in $\fifd$.
\end{proof}
\begin{examples}\label{ex:eprime} For the statements about $\catset$-indexed
posets in the following examples we assume the axiom of choice, so that all
surjections split. To obtain analogous statements without choice we have to work
with indexed posets that satisfy a \emph{stack condition} and adapt the
definitions of `\ecompletion' and `\eprime' accordingly (see
\cite[Section~3.4]{frey2013fibrational}).
\begin{anumerate}
\item\label{ex:eprime:real} Realizability
triposes~\cite{vanoosten2008realizability} have enough \eprime predicates.
Specifically, the \eprime predicates on a set $I$ in the tripos over a PCA
$\pcaa$ are precisely the (equivalence classes of) families $\varphi:I\to
P(\pcaa)$ of subsets of $\pcaa$ which consist only of singletons. Analogous
statements are true for \emph{relative realizability} and realizability over
\emph{ordered} PCAs~\cite{hofstra2006all}.
\item\label{ex:eprime:low} The canonical indexing $\fam(P):\catset\optopos$ of a
small poset $P$ always satisfies the \ssc, its \ecompletion is given by
\begin{equation}\fam(P)\to\fam(\low(P))\end{equation} 
where $\low(P)$ is  the frame of lower
sets in $P$.
\item 
The canonical indexing of a frame $A$ has enough \eprime predicates
precisely if $A$ is of the form $\low(P)$ for some poset $P$, which in turn
is equivalent to $A$ having enough \emph{completely join prime elements}~\cite[Proposition~1.1]{davey1979lattice}.
This observation is the reason for the choice of terminology.
\item\label{ex:eprime:finrestrict} 
The restriction of the canonical indexing of a poset $P$ to the 
category of finite sets satisfies the \ssc precisely if every finite 
intersection of principal ideals in $P$ is also a finite union of principal
ideals. In this case, the \ecompletion of the restriction is the 
restriction of the canonical indexing of the completion of $P$ under finite
suprema, \ie the subposet of $\low(P)$ on finite unions of principal ideals.
\item Analogously to \ref{ex:eprime:finrestrict}, the canonical 
indexing of a \emph{large} poset $P$ satisfies the \ssc \iff the 
small-join completion of $P$ is closed under intersections.
\end{anumerate}
\end{examples}
\begin{remarks}
\begin{anumerate}
\item
An indexed preorder $\fifd:\catcpos$ satisfies the \ssc if and only if its
total category $\gcons\fifd$ has \emph{weak finite limits}.

If this is the case and $\fifd\to\trip$ is an \ecompletion, then the functor
\begin{equation}
\gcons\fifd\to\catc[\trip],\qquad
(I,\varpi)\mapsto(I,{=}|_\varpi)
\end{equation} 
is an exact completion in the sense of~\cite[Theorem~29]{carboni1998regular}.
\item \ecompletions and \eprime predicates are treated
in~\cite[Section~3.4.2.1]{frey2013fibrational} for indexed posets that are
{pre-stacks} for the regular topology on a regular category. In this case, the
primality condition and the construction of the \ecompletion have to be stated
slightly differently to take the topology into account.

In~\cite{trotta2020existential} Trotta introduces a notion of \ecompletion
relative to a class of maps that is closed under composition and pullbacks.
\item $\exists$-primality can be viewed as a kind of `choice principle'. In
particular, an indexed frame $\trip$ satisfies the `rule of choice'
from~\cite[Definition~5.5]{maietti2017triposes} \iff $\top\in\trip(1)$ is
\eprime.
\end{anumerate}
\end{remarks}
Following Baues~\cite[Section~I.1]{baues1989algebraic} we call an object $C$ of
a category of fibrant objects $\mcc$ \emph{cofibrant}, if every trivial
fibration $f:B\to C$ admits a section. $\mcc$ is said to have \emph{enough}
cofibrant objects if every $A\in\mcc$ admits a \emph{cofibrant replacement},
\ie a cofibrant object $C$ and a trivial fibration $f:C\to A$.
\begin{proposition}\label{prop:enough-cofibs} Let $\tripcoppos$ be an indexed
frame. 
\begin{inumerate}
\item If $\ctau\in\canp$ such that the support $\tau_0$ of $\tau$ is \eprime,
then $\ctau$ is cofibrant.
\item
If $\trip$ has enough $\ex$-prime predicates, then $\canp$ has enough cofibrant
objects.
\end{inumerate}
\end{proposition}
\begin{proof}
To see that $\cta$ is cofibrant, let $f:\bsi\to\cta$ be a trivial fibration.
From Lemma~\ref{lem:tfib-eq-inj-surj} and Remark~\ref{rem:surj-reform} we
know that $ \tau_0 \leq\ex_f\sigma_0$, and since $\tau_0$ is
$\ex$-prime $f$ has a section $s:C\to B$ such that $\varpi\leq
s^*\sigma_0$, \ie the judgment $\tau(c)\ent_c \sigma(sc)$ holds. The judgment
\ref{ax:compat} for $s$ then follows from this and \ref{ax:inj} for $f$. Thus,
$s$ constitutes a morphism of type $\cta\to\bsi$ in $\canp$, which gives the
required section.

For the second claim, if $\aro\in\canp$ and $\trip$ has enough \eprime
predicates then there exists an object $C\in\catc$, an $\ex$-prime predicate
$\varpi\in\trip(C)$, and a morphism $e:C\to A$ such that $\ex_e\varpi = \rho_0$.
Setting 
\begin{equation}
\tau(c,c')\sequiv{\varpi(c)\wedge\rho(ec,ec')}
\end{equation}
it is easy to see that $\tau$ is a partial equivalence relation with support
$\tau_0=\varpi$, and again using Remark~\ref{rem:surj-reform} that $e$
constitutes a trivial fibration from $\cta$ to $\aro$.
\end{proof}

\section{Kan extensions along localization functors}

Recall from~\cite[Section~I-2.3]{dwyer2005homotopy} that a \emph{category with
weak equivalences} (or \emph{\wecat}) is a category $\mcc$ equipped with a class
$\cweq$ of arrows, called \emph{weak equivalences}, which satisfies
axiom~\ref{ax:a} of the definition of categories of fibrant objects.

\smallskip

The \emph{homotopy category} of a we-category is the category
$\ho(\mcc)=\mcc[\mcw\inv]$ obtained by freely inverting all weakly equivalence,
meaning that it comes with a \emph{localization functor} $E:\mcc\to\ho(\mcc)$
which inverts all weak equivalences and is initial among functors doing
so, see the references in Remark~\ref{rem:loc-thm}\ref{rem:loc-thm-hocat}.
\begin{definition}\label{def:proto-fibrant}
Let $\mcc$ be a \wecat with localization functor
$E:\mcc\to\ho(\mcc)$.
\begin{anumerate}
\item We call $X\in\mcc$ \emph{proto-fibrant}, if
$\mcc(A,X)\xto{E_{A,X}}\ho(\mcc)(EA,EX)$
is surjective for all $A\in\mcc$.
\item We say that $\mcc$ has \emph{enough} proto-fibrant objects, if every
$A\in\mcc$ admits a weak equivalence $\iota:A\to\overline{A}$ into a
proto-fibrant object (called its \emph{proto-fibrant replacement}).
\end{anumerate}
\emph{Proto-cofibrant} objects, and \wecats with enough proto-cofibrant objects
are defined dually.
\end{definition}
\begin{examples}\label{ex:proto-fib-cofib}
If in a model category $\mcm$ all objects are cofibrant, then all fibrant
objects are proto-fibrant. This is because morphisms from cofibrant to fibrant
objects in $\ho(\mcm)$ can be represented as equivalence classes of morphisms in
$\mcm$ modulo homotopy.

Similarly, cofibrant objects in \cofos $\mcc$ are proto-cofibrant, since
morphisms $f:A\to B$ in $\ho(\mcc)$ can be represented as right fractions
$f=E(g)\circ E(t)\inv$ with $t$ a trivial fibration \cite[2nd remark after
Theorem~1]{brown1973abstract}, and $t$ splits whenever $A$ is cofibrant.
\end{examples}
\begin{lemma}\label{lem:qfib-kan}
Assume that $\mcc$ is a \wecat with enough \pfib objects, $E:\mcc\to\ho(\mcc)$
is its localization functor, and $F:\mcc\to\catd$ is a functor into an arbitrary
category such that for all parallel pairs of arrows $f,g:A\to X$ into a \pfib 
object $X$ we have 
\begin{equation}\label{eq:preserve-homotopies}
  E f=Eg\qtext{implies} Ff=F g.
\end{equation}
Then $F$ admits a left Kan extension along
$E$.
\end{lemma}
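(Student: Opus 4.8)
The plan is to construct the left Kan extension $L:=\mathrm{Lan}_E F:\ho(\mcc)\to\catd$ by hand. Taking $\ho(\mcc)$ to be the standard, identity-on-objects localization, every object of $\ho(\mcc)$ is $EA$ for some $A\in\mcc$, and I would define $L$ on objects by $L(EA):=F\overline{A}$, where $\iota_A:A\to\overline{A}$ is a chosen proto-fibrant replacement (Definition~\ref{def:proto-fibrant}). To define $L$ on a morphism $\phi:EA\to EB$, I first note that $E\iota_A$ and $E\iota_B$ are invertible in $\ho(\mcc)$ (since $\iota_A,\iota_B\in\cweq$), so I can form
\[
\psi\;:=\;E\iota_B\circ\phi\circ(E\iota_A)\inv\;:\;E\overline{A}\too E\overline{B}.
\]
Since $\overline{B}$ is proto-fibrant, $\mcc(\overline{A},\overline{B})\to\ho(\mcc)(E\overline{A},E\overline{B})$ is surjective, so I may pick $g:\overline{A}\to\overline{B}$ with $Eg=\psi$ and set $L\phi:=Fg$.

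The crucial step---and the only place the hypothesis \eqref{eq:preserve-homotopies} enters---is well-definedness: if $g,g':\overline{A}\to\overline{B}$ both lift $\psi$, then they form a parallel pair into the proto-fibrant object $\overline{B}$ with $Eg=Eg'$, so $Fg=Fg'$ by \eqref{eq:preserve-homotopies}. Functoriality is then routine: $\id_{EA}$ produces $\psi=\id_{E\overline{A}}$, realized by $\id_{\overline{A}}$; and for composable $\phi,\phi'$ with lifts $g,g'$, the interior $E\iota_B$-factors cancel, so that $g'\circ g$ lifts the $\psi$ associated with $\phi'\circ\phi$, giving $L(\phi'\circ\phi)=L\phi'\circ L\phi$.

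I would then define the unit $\eta:F\Rightarrow LE$ by $\eta_A:=F\iota_A$. Naturality at $h:A\to B$ reduces to $L(Eh)\circ F\iota_A=F\iota_B\circ Fh$; writing $L(Eh)=Fg$ with $Eg=E\iota_B\circ Eh\circ(E\iota_A)\inv$, this is the statement $E(g\circ\iota_A)=E(\iota_B\circ h)$, so $g\circ\iota_A$ and $\iota_B\circ h$ are again a parallel pair into $\overline{B}$ agreeing after $E$, and \eqref{eq:preserve-homotopies} gives the desired equality after applying $F$.

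For the universal property, given $G:\ho(\mcc)\to\catd$ and $\alpha:F\Rightarrow GE$, I would set $\beta_{EA}:=G(E\iota_A)\inv\circ\alpha_{\overline{A}}$ (legitimate since $G$ preserves the isomorphism $E\iota_A$). The triangle identity $(\beta E)\circ\eta=\alpha$ and the naturality of $\beta$ both drop out by substituting the naturality squares of $\alpha$ (at $\iota_A$, respectively at a lift $g$) into the definitions. Uniqueness is the last point: if $\beta'$ also satisfies $(\beta'E)\circ\eta=\alpha$, naturality of $\beta'$ along $E\iota_A:EA\to E\overline{A}$ gives $G(E\iota_A)\circ\beta'_{EA}=\beta'_{E\overline{A}}\circ L(E\iota_A)$; writing $L(E\iota_A)=Fg$ for a lift $g$ with $Eg=E\iota_{\overline{A}}$, we get $Fg=F\iota_{\overline{A}}=\eta_{\overline{A}}$ by \eqref{eq:preserve-homotopies}, so the right-hand side is $\beta'_{E\overline{A}}\circ\eta_{\overline{A}}=\alpha_{\overline{A}}$, forcing $\beta'_{EA}=G(E\iota_A)\inv\circ\alpha_{\overline{A}}=\beta_{EA}$. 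I expect the genuine work to be concentrated entirely in the well-definedness and naturality bookkeeping around the lifts; every such obstruction dissolves by invoking \eqref{eq:preserve-homotopies} on a parallel pair into a proto-fibrant object, after which the universal property is purely formal.
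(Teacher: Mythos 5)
Your proof is correct: every invocation of \eqref{eq:preserve-homotopies} is applied to a genuinely parallel pair landing in a proto-fibrant object (well-definedness of $L\phi$, naturality of the unit via $g\circ\iota_A$ versus $\iota_B\circ h$, and the identification $L(E\iota_A)=F\iota_{\overline{A}}$ in the uniqueness step), and the functoriality and universal-property checks all go through. The underlying construction is the same as the paper's --- the value at $EA$ is $F\overline{A}$, and morphisms are handled by lifting along the surjections $\mcc(-,\overline{B})\to\ho(\mcc)(E(-),E\overline{B})$ --- but the verification is packaged differently. The paper invokes the pointwise-colimit criterion for left Kan extensions (Mac Lane, Theorem~X.3.1, together with $E$ being bijective on objects) and therefore only has to exhibit $F\overline{A}$ as a colimit of $(E/EA)\to\mcc\xrightarrow{F}\catd$, with cocone $\eta_f=F(f^\dagger)$ and the universal property extracted from the distinguished object $\kappa=(E\iota)\inv$ of the comma category; your uniqueness argument via naturality of $\beta'$ at $E\iota_A$ is the exact translation of that last step. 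What your route buys is self-containedness and explicitness --- no appeal to the pointwise formula, and a concrete description of $L$, its unit $\eta_A=F\iota_A$, and the mediating transformation $\beta_{EA}=G(E\iota_A)\inv\circ\alpha_{\overline{A}}$ --- at the cost of checking functoriality of $L$ and naturality of $\eta$ and $\beta$ by hand. What the paper's route buys is brevity: functoriality and the universal property come for free from the general theorem once the colimits are identified.
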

\begin{proof}
Using the dual of~\cite[Theorem~X.3.1]{maclanecwm} and the fact that $E$ is
identity-on-objects, it is sufficient to show that for each $A\in\mcc$ the
functor
\begin{equation}
(E/EA)\xrightarrow{U}\mcc\xrightarrow{F}\catd
\end{equation} 
has a colimit. Let $\iota:A\to\overline A$ be a \pfib replacement. Then for each
$f:EB\to EA$ there exists $f^\dagger:B\to\overline A$ such that $E f^\dagger =
E\iota\circ f$. We define a cocone $\eta:F\circ U\stackrel{.}{\to}F\overline A$
by letting $\eta_f = F(f^\dagger)$, which does not depend on the choice of
$f^\dagger$ and is natural by~\eqref{eq:preserve-homotopies}. We define $\kappa
=(E\iota)\inv$ and note that $\eta_\kappa=\id_{F\overline A}$ since
$\id_{\overline{A}}$ is a choice for $\kappa^\dagger$. Given another cocone
$\theta:F\circ U\stackrel{.}{\to}D$ the map $\theta_\kappa$ constitutes a cocone
morphism $\eta\to\theta$, and it can be seen to be the only one by inspecting
the naturality condition for this cocone morphism at $\kappa$.
\end{proof}
\begin{blank}{Comparing left and right Kan extensions}\label{bl:compare}
From the $\catcat$-enriched universal property of 
localizations~\cite[Lemma~1.2]{gabriel1967calculus} it follows that 
precomposition 
with localization functors $E:\mcc\to\ho(\mcc)$ is fully faithful for arbitrary 
target categories $\catd$. From this it is immediate that every functor 
$G:\ho(\mcc)\to\catd$ is both a left and a right Kan extension of $G\circ E$ 
along $E$.
\begin{equation}
G=E_*(G\circ E)=E_!(G\circ E)
\end{equation}
In particular, if $F:\mcc\to\catd$ inverts all weak equivalences in $\mcc$, 
then the unique $\tilde F:\ho(\mcc)\to\catd$ with $\tilde F\circ E=F$ is both
a left and a right Kan extension.

Now if $F:\mcc\to\catd$ is such that both Kan extensions $E_!F$ and $E_*F$ exist, 
the 
counit map $\ve:(E_*F)\circ E\to F$ gives rise by functoriality to a transformation
\begin{equation}\label{eq:xi}
\xi:E_*F=E_!(E_*F\circ E)\xrightarrow{E_*\ve} E_!F
\end{equation} 
from the \emph{right} to the \emph{left} Kan extension. This is called the
\emph{`points-to-pieces' transform} by Lawvere in the setting of axiomatic
cohesion~\cite{lawvere2007axiomatic}. If $F$ inverts weak equivalences, then
$\ve$ and therefore $\xi$ becomes invertible by the remarks above.

Given a we-category $\mcc$ with enough proto-fibrant and enough proto-cofibrant
objects, and a functor $F:\mcc\to\catd$ such that $E(f)=E(g)$ {implies}
$F(f)=F(g)$ for all parallel $f,g$ with either cofibrant domain or fibrant
codomain\,---\,such that both Kan extensions exist by
Lemma~\ref{lem:qfib-kan}\,---\,there is an easy direct description of the
transformation~\eqref{eq:xi}: Given $A\in\mcc$ with proto-cofibrant replacement
$\kappa:\underline{A}\to A$ and proto-fibrant replacement
$\iota:A\to\overline{A}$, we have $E_*FA=F\underline{A}$, $E_!FA=F\overline{A}$
and 
\begin{equation}
\xi_A\;=\;E(\underline{A}\xrightarrow{\kappa}A\xrightarrow{\iota}\overline{A})
\;:\; E_*FA\to E_!FA.
\end{equation}
\end{blank}

\section{Deriving cartesian maps between indexed frames}

\begin{theorem}\label{thm:enough-fib-cofib}
Let $\trip:\catcpos$ be an indexed frame.
\begin{inumerate}
\item If $\trip$ is a tripos, then $\canp$ has enough \pfib objects.
\item If $\trip$ has enough $\exists$-prime predicates, then $\canp$ has enough
proto-cofibrant objects.
\end{inumerate}
\end{theorem}
\begin{proof}
For partial equivalence relations in triposes the property of being
proto-fibrant is equivalent to what is called \emph{weakly complete}
in~\cite[Definition~3.2]{hjp80}, and from the proofs of Lemma~3.1 and
Proposition~3.3 of loc.~cit.~one can extract a proof that $\canp$ has enough
\pfib objects. Specifically, a proto-fibrant replacement of $\aro$ is given by
\begin{equation}\nameof{\rho}:\aro\to(\tripower(A),\overline\rho)\end{equation} 
where $\nameof{\rho}$ is as in \ref{ax:po} and 
\begin{align}
  \overline\rho(m,n)\;\equiv
                  &\; (\fa x\qdot \ve_A(x,m)\Leftrightarrow \ve_A(x,n))
                \\&\wedge(\fa x \,y \qdot \ve_A(x,m)\wedge \ve_A(y,m)\imp \rho(x,y))
                \\&\wedge(\ex x\qdot \ve_A(x,m)).
\end{align}
The second claim follows from
Proposition~\ref{prop:enough-cofibs} and Examples~\ref{ex:proto-fib-cofib}.
\end{proof}
\begin{definition}
Let $\Phi:\trip\to\triq:\catcpos$ be a cartesian map between indexed frames. 
The functor 
\begin{equation}\label{eq:c-phi}
\catc\langle\Phi\rangle\;:\;\canp\;\to\;\canq
\end{equation}
sends objects $\aro\in\canp$ to objects $(A,\Phi_{A\times
A}(\rho))\in\canq$, and morphisms in $\canp$ to morphisms in $\canq$ having the
same underlying map in $\catc$. 
\end{definition}
For the statement of the next theorem  we use the terminology of `derived
functor', which we recall from \cite[I-2.3~(v)]{dwyer2005homotopy}.
\begin{definition}Let $\mcc$ and $\mcd$ be we-categories with localization
functors $E_\mcc:\mcc\to\ho(\mcc)$ and $E_\mcd:\mcd\to\ho(\mcd)$, and let 
$F:\mcc\to\mcd$ be an arbitrary functor.
\begin{anumerate}
\item A \emph{left derived functor} of $F$ is a \emph{right} Kan extension of 
$E_\mcd\circ F$ along $E_\mcc$.
\item A \emph{right derived functor} of $F$ is a \emph{left} Kan extension of 
$E_\mcd\circ F$ along $E_\mcc$.
\end{anumerate}
\end{definition}
\begin{theorem}\label{thm:derived}
 Let 
$
  \Phi:\trip\to\triq : \catcpos
$
be a cartesian map between indexed frames.
\begin{inumerate}
  \item The functor $\catc\langle\Phi\rangle$ preserves finite limits.
  \item We have $E_\triq(\catc\langle\Phi\rangle
  (f))=E_\triq(\catc\langle\Phi\rangle (g))$ for all parallel maps $f,g:A\to B$
  in $\canp$ with $E_\trip(f)=E_\trip(g)$,
  where $E_\trip$ an $E_\triq$ are the respective localization functors.
  \item\label{thm:derived-right}
  If $\trip$ is a tripos then $\catc\langle\Phi\rangle$ admits a right derived
  functor.
  \item\label{thm:derived-left}
  If $\trip$ has enough \eprime predicates then $\catc\langle\Phi\rangle$
  admits a left derived functor.
\end{inumerate}
\end{theorem}
\begin{proof}
The first two claims are straightforward (for the second one can use
Lemma~\ref{lem:homotopy}). The third and fourth claim follow from
Lemma~\ref{lem:qfib-kan} and its dual, respectively, whose hypotheses are
satisfied by the second claim and the two parts of
Theorem~\ref{thm:enough-fib-cofib}.
\end{proof}
\begin{notation}
In the situation of Theorem~\ref{thm:derived}, whenever they exist, we 
denote the left and right derived functors of $\canphi$ by 
$L_\Phi,R_\Phi:\csqp\to\csqq$, respectively.
\end{notation}
\begin{lemma}\label{lem:monic}
Let $\trip:\catcpos$ be a tripos with enough \eprime predicates and let
$\Phi:\trip\to\triq$ be a cartesian map into an indexed frame. Then the 
comparison natural transformation $\xi:L_\Phi\to R_\Phi$ is componentwise monic.
\end{lemma}
\begin{proof}
Let $\aro\in\canp$ and let 
\begin{equation}
(\underline{A},\underline{\rho})\xrightarrow{\kappa}(A,\rho)\xrightarrow{\iota}
(\overline{A},\overline{\rho})
\end{equation}
be proto-cofibrant and proto-fibrant replacements. Then $\iota\circ\kappa$
is a weak equivalence in $\canp$, in particular we have 
\begin{description}
\item[(inj)] $\underline{\rho}x,\underline{\rho}y,\overline{\rho}(\iota(\kappa
x),\iota(\kappa y))\ent_{x,y}\underline{\rho}(x,y)\,$.
\end{description}
The validity of this judgment is preserved by $\canphi$, which implies that 
the image under $E_\triq$ is monic in $\catc[\triq]$, as in the proof of
Theorem~\ref{thm:canp-hocat-catcp}.
\end{proof}
\begin{remark}
As is elaborated in~\cite{frey2015triposes} using different terminology, the 
universal property of right derived functors implies that the assignment 
$\Phi\mapsto R_\Phi$ is \emph{oplax functorial} in the sense that there is a 
canonical natural transformation $R_{\Psi\circ\Phi}\to R_{\Psi}\circ R_\Phi$
whenever the respective derived functors exist. Dually, the assignment 
$\Phi\mapsto L_\Phi$ is lax functorial.
\end{remark}
We conclude by giving some examples.
\begin{examples}\label{ex:derived}
\begin{anumerate}
\item If $\Phi:\trip\to\triq$ is a morphism of indexed frames then $\canphi$
preserves weak equivalences and therefore $E_\triq\circ\canq$ inverts them. By
the remarks in~\ref{bl:compare} we conclude that $L_\Phi$ and $R_\Phi$ can be
chosen to be equal and then the comparison $\xi$ is an identity.
\item As mentioned in the introduction, for cartesian maps $\Phi:\trip\to\triq$
between \emph{triposes} the functors $R_\Phi$ were used (using different
terminology) in~\cite[Section~3]{hjp80} to construct geometric morphisms between
toposes from adjunctions of cartesian indexed monotone maps between triposes.
\item For every $\catset$-based tripos $\trip:\setpos$ we can define a cartesian
transformation
\begin{equation}
\gamma : \trip\to\sub(\catset),\qquad\gamma_I
(\varphi)=\setof{i\in I}{\top\leq (1\xrightarrow{i}I)^*(\varphi)}
\end{equation}
\ie $\gamma_I(\vf)$ is the subset of $I$ where $\vf$ is `pointwise true'.
We have $\catset[\sub(\catset)]\simeq\catset$, and modulo this equivalence
$R_\gamma$ coincides with the global sections functor
\begin{equation}
\Gamma=\catset[\trip](1,-):\catset[\trip]\to\catset\,.
\end{equation}
If $\trip$ has enough \eprime predicates then $\canphi$ also admits a left
derived functor by Theorem~\ref{thm:derived}. In this case we have to
distinguish two cases.
\begin{itemize}
\item If $\top\in\trip(1)$ is \eprime then $\gamma$ is a morphism of indexed
frames and $L_\gamma$ and $R_\gamma$ coincide.
\item Otherwise, $L_\gamma$ is constant $0$ since
$\gamma$ sends all \eprime predicates to $\bot$.
\end{itemize}
\item Let $j:\sfop{eff}\to\sfop{eff}$ be the local operator 
on the effective tripos corresponding to Lifschitz
realizability~\cite[Section~4.4]{vanoosten2008realizability}. 
Its right derived functor $R_j:\cateff\to\cateff$ is the cartesian 
reflector corresponding to the Lifschitz-subtopos of the effective topos
$\cateff=\catset[\treff]$.

To understand $L_j$, let $\cta\in\catset\angs{\treff}$ be a partial equivalence
relation with \eprime support and fibrant replacement
$\iota:\cta\to(\overline{C},\overline{\tau})$.
The composite
\begin{equation}
\cta\to({C},j\,{\tau})\xrightarrow{\iota}(\overline{C},j\,\overline{\tau})
\end{equation}
in $\catset\angs{\treff}$ is mapped by $E:\catset\angs{\treff}\to\cateff$
to 
\begin{equation}\begin{tikzcd}[column sep = large]
    \cta
        \ar[dr,"\eta"', shift right = 1]
        \ar[r,"f"']
&   L_j\cta
        \ar[d,"\xi",tail]
\\  
&   R_j\cta
\end{tikzcd}\end{equation}
where $\eta$ is the unit of the reflector. We've shown in Lemma~\ref{lem:monic}
that $\xi$ is monic. Moreover, it can be seen relatively easily from the
definition in~\cite{vanoosten2008realizability} that $j(\varpi)=\varpi$ for any
\eprime predicate $\varpi$ in $\sfop{eff}$, which means by
Remark~\ref{rem:surj-reform} that the judgment \ref{ax:surj} holds for the map
$\cta\to({C},j\,{\tau})$. From this we can deduce using
Lemma~\ref{lem:frel-iso}\ref{lem:frel-iso-iso} that its image $f$  under the
localization functor is a cover, and we get a characterization of $L_j\cta$ as
fitting into the middle of the cover-mono factorization of $\cta\to R_j\cta$.

In other words, $L_j$ is the reflection onto the $j$-separated objects in
$\cateff$.
\item Consider the poset $P=\{l\leq\top\geq r\}$ and let $f:\low(P)\to\{0\leq
1\}$ be the unique map with $f\inv(\{1\})=\{\{l,r\},\{l,\top,r\}\}$. 
Then $f$ preserves finite meets. Moreover, we have 
\begin{equation}
\catset[\fam(\low(P))]\simeq\catset^{P\op}\qqtext{and}\catset[\fam(\{0\leq
1\})]\simeq\catset\,,
\end{equation}
and modulo these equivalences the left and right derived
functors of $\catset\angs{\fam(f)}$ are given by 
\begin{empheq}{align}
L_f,R_f\scol\catset^{P\op}&\to\catset\\
L_f(A\leftarrow C\to B) &= \image(C\to A\times B)\\
R_f(A\leftarrow C\to B) &= A\times B.
\end{empheq}
\end{anumerate}
\end{examples}

\providecommand{\bysame}{\leavevmode\hbox to3em{\hrulefill}\thinspace}
\providecommand{\MR}{\relax\ifhmode\unskip\space\fi MR }
\providecommand{\MRhref}[2]{%
  \href{http://www.ams.org/mathscinet-getitem?mr=#1}{#2}
}
\providecommand{\href}[2]{#2}

\bigskip\footnotesize
Jonas Frey, 
\textsc{
Department of Philosophy, 
Carnegie Mellon University, 
5000 Forbes Avenue, 
Pittsburgh, PA 15213, 
USA}
\par\nopagebreak
\textit{E-mail address}: \texttt{jonasf@andrew.cmu.edu}


\begin{thebibliography}{{Mac}98}

\bibitem[Bau89]{baues1989algebraic}
H.J. Baues, \emph{Algebraic homotopy}, Cambridge Studies in Advanced
  Mathematics, vol.~15, Cambridge University Press, Cambridge, 1989.

\bibitem[Bro73]{brown1973abstract}
K.S. Brown, \emph{Abstract homotopy theory and generalized sheaf cohomology},
  Transactions of the American Mathematical Society \textbf{186} (1973),
  419–458.

\bibitem[CV98]{carboni1998regular}
A.~Carboni and E.M. Vitale, \emph{Regular and exact completions}, J. Pure Appl.
  Algebra \textbf{125} (1998), no.~1-3, 79–116. \MR{1600009 (99b:18003)}

\bibitem[{Dav}79]{davey1979lattice}
B.A. {Davey}, \emph{On the lattice of subvarieties}, Houston Journal of
  Mathematics \textbf{5} (1979), 183--192 (English).

\bibitem[DHKS04]{dwyer2005homotopy}
W.G. Dwyer, P.S. Hirschhorn, D.M. Kan, and J.H. Smith, \emph{Homotopy limit
  functors on model categories and homotopical categories}, Mathematical
  Surveys and Monographs, vol. 113, American Mathematical Society, Providence,
  RI, 2004.

\bibitem[Fre13]{frey2013fibrational}
J.~Frey, \emph{A fibrational study of realizability toposes}, Ph.D. thesis,
  Paris 7 University, 2013.

\bibitem[Fre15]{frey2015triposes}
\bysame, \emph{Triposes, q-toposes and toposes}, Annals of Pure and Applied
  Logic \textbf{166} (2015), no.~2, 232–259.

\bibitem[GZ67]{gabriel1967calculus}
P.~Gabriel and M.~Zisman, \emph{Calculus of fractions and homotopy theory},
  Ergebnisse der Mathematik und ihrer Grenzgebiete, Band 35, Springer-Verlag
  New York, Inc., New York, 1967. \MR{0210125}

\bibitem[HJP80]{hjp80}
J.M.E. Hyland, P.T. Johnstone, and A.M. Pitts, \emph{Tripos theory}, Math.
  Proc. Cambridge Philos. Soc. \textbf{88} (1980), no.~2, 205–232.

\bibitem[Hof06]{hofstra2006all}
P.J.W. Hofstra, \emph{All realizability is relative}, Math. Proc. Cambridge
  Philos. Soc. \textbf{141} (2006), no.~02, 239–264.

\bibitem[Jac01]{jacobs2001categorical}
B.~Jacobs, \emph{Categorical logic and type theory}, Elsevier Science Ltd,
  2001.

\bibitem[Joh02a]{elephant1}
P.T. Johnstone, \emph{Sketches of an elephant: a topos theory compendium.
  {V}ol. 1}, Oxford Logic Guides, vol.~43, Oxford University Press, New York,
  2002. \MR{MR1953060 (2003k:18005)}

\bibitem[Joh02b]{elephant2}
\bysame, \emph{Sketches of an elephant: a topos theory compendium. {V}ol. 2},
  Oxford Logic Guides, vol.~44, Oxford University Press, Oxford, 2002.

\bibitem[KS17]{kapulkin2017quasicategories}
K.~Kapulkin and K.~Szumi{\l}o, \emph{Quasicategories of frames of cofibration
  categories}, Applied Categorical Structures \textbf{25} (2017), no.~3,
  323--347.

\bibitem[Law70]{lawvere1970equality}
F.W. Lawvere, \emph{Equality in hyperdoctrines and the comprehension schema as
  an adjoint functor}, Applications of Categorical Algebra \textbf{17} (1970),
  1–14.

\bibitem[Law07]{lawvere2007axiomatic}
\bysame, \emph{Axiomatic cohesion}, Theory and Applications of Categories
  \textbf{19} (2007), no.~3, 41--49.

\bibitem[{Mac}98]{maclanecwm}
S.~{Mac Lane}, \emph{Categories for the working mathematician}, second ed.,
  Graduate Texts in Mathematics, vol.~5, Springer-Verlag, New York, 1998.
  \MR{1712872 (2001j:18001)}

\bibitem[MPR17]{maietti2017triposes}
M.~Maietti, F.~Pasquali, and G.~Rosolini, \emph{Triposes, exact completions,
  and {H}ilbert's {$\varepsilon$}-operator}, Tbilisi Mathematical Journal
  \textbf{10} (2017), no.~3, 141--166. \MR{3725757}

\bibitem[MR12a]{maietti2012elementary}
M.~Maietti and G.~Rosolini, \emph{Elementary quotient completion}, Theory and
  Applications of Categories \textbf{27} (2012), Paper No. 17, 463.
  \MR{3699058}

\bibitem[MR12b]{maietti2012unifying}
\bysame, \emph{Unifying exact completions}, Applied Categorical Structures
  (2012), 1–10.

\bibitem[Pit81]{pitts81}
A.M. Pitts, \emph{The theory of triposes}, Ph.D. thesis, Cambridge Univ., 1981.

\bibitem[Pit02]{pitts2002}
\bysame, \emph{Tripos theory in retrospect}, Math. Structures Comput. Sci.
  \textbf{12} (2002), no.~3, 265–279.

\bibitem[Ros16]{rosolini2016category}
G.~Rosolini, \emph{The category of equilogical spaces and the effective topos
  as homotopical quotients}, Journal of Homotopy and Related Structures
  \textbf{11} (2016), no.~4, 943--956.

\bibitem[Ste13]{stekelenburg2013realizability}
W.P. Stekelenburg, \emph{Realizability categories}, Ph.D. thesis, Utrecht
  University, 2013.

\bibitem[Tro20]{trotta2020existential}
D.~Trotta, \emph{The existential completion}, Theory and Applications of
  Categories \textbf{35} (2020), Paper No. 43, 1576--1607. \MR{4162120}

\bibitem[vdB20]{vandenberg2020univalent}
B.~van~den Berg, \emph{Univalent polymorphism}, Annals of Pure and Applied
  Logic \textbf{171} (2020), no.~6, 102793, 29. \MR{4083000}

\bibitem[vdBM16]{vandenberg2016exact}
B.~van~den Berg and I.~Moerdijk, \emph{Exact completion of path categories and
  algebraic set theory}, arXiv preprint arXiv:1603.02456 (2016).

\bibitem[vO08]{vanoosten2008realizability}
J.~van Oosten, \emph{Realizability: {A}n {I}ntroduction to its {C}ategorical
  {S}ide}, Elsevier Science Ltd, 2008.

\bibitem[vO15]{vanoosten2015notion}
\bysame, \emph{A notion of homotopy for the effective topos}, Mathematical
  Structures in Computer Science \textbf{25} (2015), no.~5, 1132--1146.

\end{thebibliography}
\end{document}